\newtheorem{thm}{Theorem}
\newtheorem{claim}[thm]{Claim}
\newtheorem{conj}[thm]{Conjecture}
\newtheorem{cor}[thm]{Corollary}
\newtheorem*{thm*}{Theorem}
\theoremstyle{definition}
\theoremstyle{remark}
\newcommand{\NOS}{\mathbb{N}}
\newcommand{\OS}{\mathbb{S}}
\def\acc#1{\left\{ #1\right\}}
\def\floor#1{\left\lfloor #1\right\rfloor}
\def\ceil#1{\left\lceil #1\right\rceil}
\renewcommand{\le}{\leqslant}
\renewcommand{\ge}{\geqslant}
\begin{document}

\title{Islands in graphs on surfaces}

\author{Louis Esperet} \address{Laboratoire G-SCOP (CNRS,
   Grenoble-INP), Grenoble, France}
\email{louis.esperet@g-scop.fr}

\author{Pascal Ochem} \address{LIRMM (CNRS, Universit\'e de
  Montpellier), Montpellier, France }
   
\email{ochem@lirmm.fr}

\thanks{Louis Esperet is partially supported by ANR Project Heredia
  (\textsc{anr-10-jcjc-0204-01}), ANR Project Stint
  (\textsc{anr-13-bs02-0007}), and LabEx PERSYVAL-Lab
  (\textsc{anr-11-labx-0025}).}

\date{}
\sloppy

\begin{abstract}
An island in a graph is a set $X$ of vertices, such that each element
of $X$ has few neighbors outside $X$. In this paper, we prove several
bounds on the size of islands in large graphs embeddable on fixed
surfaces. As direct consequences of our results, we obtain that:
\begin{enumerate}
\item Every graph of genus $g$ can be colored from lists of size 5, in
  such a way that each monochromatic component has size
  $O(g)$. Moreover all but $O(g)$ vertices lie in monochromatic
  components of size at most 3.
\item Every triangle-free graph of genus $g$ can be colored from lists
  of size 3, in such a way that each monochromatic component has
  size $O(g)$. Moreover all but $O(g)$ vertices lie in
  monochromatic components of size at most 10.
\item Every graph of girth at least 6 and genus $g$ can be colored
  from lists of size 2, in such a way that each monochromatic
  component has size $O(g)$. Moreover all but $O(g)$ vertices lie in
  monochromatic components of size at most 16.
\end{enumerate}
While (2) is optimal up to the size of the components, we conjecture
that the size of the lists can be decreased to 4 in (1), and the girth
can be decreased to 5 in (3). We also study the complexity of
minimizing the size of monochromatic components in 2-colorings of planar
graphs.
\end{abstract}

\maketitle

\section{Introduction}

In this paper we consider a relaxed version of the classical notion of
proper coloring of a graph. We are interested in vertex colorings of
graphs with the property that each color class consists of the
disjoint union of (connected) components of bounded size. These
components are said to be \emph{monochromatic}, and the \emph{size} of
a monochromatic component is its number of vertices. A proper coloring
is the same as a coloring in which every monochromatic component has
size 1, so by allowing monochromatic components of larger size, one
expects that the minimum number of colors needed might decrease
significantly. For instance it was proved by Haxell, Szab\'o and
Tardos~\cite{HST03} that every graph with maximum degree at most $5$
can be 2-colored in such a way that all monochromatic components have
size at most 20000 (such graphs have chromatic number as large as 6).

\smallskip

It was conjectured by Hadwiger that every graph with no $K_{t}$-minor
has a proper coloring with $t-1$ colors. The case $t=5$ was shown to
be equivalent to the famous 4 Color Theorem, which states that every
planar graph has a proper 4-coloring. On the other hand, it was proved
by Kleinberg, Motwani, Raghavan, and Venkatasubramanian~\cite{KMRV97},
and independently by Alon, Ding, Oporowski and Vertigan~\cite{ADOV03}
that there is no constant $c$ such that every planar graph has a
3-coloring in which every monochromatic component has size at most $c$. More
generally, for every $t$, there are graphs with no $K_{t}$-minor that
cannot be colored with $t-2$ colors such that all monochromatic
components have size bounded by a function of $t$ (see~\cite{LMST08}). It follows that the
bound predicted by Hadwiger's conjecture (and proved for $t=5,6$) on
the chromatic number of a graph with no $K_t$-minor is best possible,
even in our relaxed setting.

\smallskip

In this paper we prove that the bound can be significantly decreased
in the specific case of graphs embeddable on surfaces of bounded
genus. Given a graph $G$, a $k$-list assignment $L$ (for the vertices
of $G$) is a collection of lists $L(v)$, $v\in V(G)$, such that each
list contains at least $k$ elements. Given a list assignment $L$, an
$L$-coloring $c$ of $G$ is the choice of an element $c(v)\in L(v)$ for
each vertex $v\in V(G)$. Unless stated otherwise, such a coloring is
not necessarily proper. It was proved by Thomassen~\cite{T94} that for
every planar graph $G$ and every $5$-list assignment $L$, the graph
$G$ has a proper $L$-coloring. We will prove that the same holds from
any graph embeddable on a surface of genus $g$, provided that
monochromatic components are only required to have size bouned by
$O(g)$ (Theorem~\ref{th:g3col}). The fact that cliques of order
$\Omega(\sqrt{g})$ can be embedded on such surfaces shows that the
size of monochromatic components has to depend on $g$. Moreover we
will show that we can find a list-coloring in which all vertices
except $O(g)$ of them lie in monochromatic components of size at most
3.

\smallskip

A theorem of Gr\"{o}tzsch~\cite{G59} states that every triangle-free
planar graph has a proper 3-coloring. Esperet and Joret~\cite{EJ13}
proved that there exist no constant $c$ such that every triangle-free
planar graph has a 2-coloring in which every monochromatic component has size at
most $c$. Hence, it follows again that Gr\"{o}tzsch's theorem cannot
be improved even in our relaxed setting. We will show however that
every triangle-free graph embeddable on a surface of genus $g$ can be
colored from any 3-list assignment, in such a way that all
monochromatic components have size $O(g)$, and all vertices except
$O(g)$ lie in a monochromatic component of size at most 10
(Theorem~\ref{th:g4col}). The case of triangle-free graph is
particularly interesting because Voigt~\cite{V95} proved that there
exists a triangle-free planar graph $G$ and a 3-list assignment $L$ such that
$G$ is not $L$-colorable. So our result is non-trivial (and previously
unknown, as far as we are aware of) even in the case of planar graphs.

\smallskip

The \emph{girth} of a graph $G$ is the smallest size of a cycle in
$G$. We will also show that every graph of girth at least 6 embeddable
on a surface of genus $g$ can be colored from any 2-list assignment,
in such a way that all monochromatic components have size $O(g)$, and
all vertices except $O(g)$ lie in a monochromatic component of size at
most 16 (Theorem~\ref{th:g6col}).

\smallskip

All these results are direct consequences of purely structural results
on (large) graphs embeddable on surfaces of bounded genus.  Given a
graph $G$, a \emph{$k$-island} of $G$ is a non-empty set $X$ of
vertices of $G$ such that each vertex of $X$ has at most $k$ neighbors
outside $X$ in $G$. The \emph{size} of a $k$-island is the number
of vertices it contains. In Section~\ref{sec:island} we show that

\begin{enumerate}
\item large graphs of bounded genus have a 4-island
of size at most 3 (Theorem~\ref{th:g3island});
\item large triangle-free graphs of bounded genus have a 2-island
of size at most 10 (Theorem~\ref{th:g4island});
\item large graphs of girth at least 6 and bounded genus have a 1-island
of size at most 16 (Theorem~\ref{th:g6island}).
\end{enumerate}

The proofs of these three results use the discharging method and are
very similar, but unfortunately each one has some particularities and
therefore we have not been able to factorize them.

In Section~\ref{sec:complex}, we study the computational aspects of
minimizing the size of monochromatic components in 2-colorings of
graphs. We show that approximating this minimum within a constant
multiplicative factor is NP-hard, even when the input graph is a
2-degenerate graph of girth at least 8, or a 2-degenerate
triangle-free planar graph.

\section{Graphs on surfaces}

All the graphs in this paper are simple (i.e., without loops and
multiple edges).

In this paper, a {\em surface} is a non-null compact connected
2-manifold without boundary.  We refer the reader to the monograph of Mohar
and Thomassen~\cite{MoTh} for background on graphs on surfaces.

A surface can be orientable or non-orientable. The \emph{orientable
  surface~$\OS_h$ of genus~$h$} is obtained by adding $h\ge0$
\emph{handles} to the sphere; while the \emph{non-orientable
  surface~$\NOS_k$ of genus~$k$} is formed by adding $k\ge1$
\emph{cross-caps} to the sphere. The \emph{Euler
  characteristic~$\chi(\Sigma)$} of a surface~$\Sigma$ is $2-2h$ if
$\Sigma=\OS_h$, and $2-k$ if $\Sigma=\NOS_k$.

We say that an embedding is \emph{cellular} if every face is
homeomorphic to an open disc of~$\mathbb{R}^2$. Euler's Formula states
that if~$G$ is a graph with a cellular embedding in~$\Sigma$, with
vertex-set~$V$, edge-set~$E$ and face-set~$F$, then
$|V|-|E|+|F|\:=\:\chi(\Sigma)$.

Finally, if $f$ is a face of a graph~$G$ cellularly embedded in a
surface~$\Sigma$, then a \emph{boundary walk of~$f$} is a walk
consisting of vertices and edges as they are encountered when walking
along the whole boundary of~$f$, starting at some vertex and following
some orientation of the face. The \emph{degree of a face~$f$},
denoted~$d(f)$, is the number of edges on a boundary walk of~$f$ (note
that some edges may be counted more than once).

\section{Islands in graphs on surfaces}\label{sec:island}

Recall that a \emph{$k$-island} in a graph $G$ is a non-empty set $X$
of vertices of $G$ such that each vertex of $X$ has at most $k$
neighbors outside $X$ in $G$, and that the \emph{size} of $X$ is its
cardinality $|X|$.

\begin{thm}\label{th:g3island}
Let $\chi$ be an integer, and let $G$ be a connected graph that can be
embedded on a surface of Euler characteristic $\chi$. If $G$ has more
than $-72 \chi$ vertices, then it contains a 4-island of size at most
3.
\end{thm}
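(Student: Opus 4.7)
The plan is a proof by contradiction using the discharging method. Suppose $G$ is a counterexample: a connected graph embedded in a surface $\Sigma$ of Euler characteristic $\chi$, with no 4-island of size at most $3$, yet $|V(G)|>-72\chi$. We may assume the embedding is cellular, since otherwise $G$ embeds cellularly in some surface $\Sigma'$ of Euler characteristic $\chi'\ge\chi$, and the hypothesis $|V(G)|>-72\chi\ge-72\chi'$ carries over. A short case analysis over the candidates $X\subseteq V(G)$ with $|X|\in\{1,2,3\}$ extracts from the non-existence of a 4-island of size $\le 3$ the following structural constraints: (A) $\delta(G)\ge 5$; (B) no two adjacent vertices both have degree $5$; (C) no triangle has all three vertices of degree $\le 6$; and (D) no vertex of degree $6$ has two degree-$5$ neighbors, since such a configuration would form a path-shaped $3$-island (the two degree-$5$ vertices being non-adjacent by~(B)).

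For the discharging, I assign initial charge $\mu(v)=d(v)-6$ to each vertex $v$ and $\mu(f)=2d(f)-6$ to each face $f$, so that by Euler's formula $\sum_v \mu(v)+\sum_f \mu(f)=-6\chi$. Degree-$5$ vertices begin with charge~$-1$, degree-$6$ with~$0$, higher degrees and faces of degree $\ge 4$ with positive charge, and triangular faces with~$0$. My goal is to redistribute charges so that every face ends non-negative and every vertex ends with charge at least $\tfrac{1}{12}$; this would yield $|V(G)|/12 \le -6\chi$, i.e., $|V(G)|\le -72\chi$, the desired contradiction.

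The redistribution rules should send charge downward to the deficient low-degree vertices: each face of degree $\ge 4$ donates a fixed amount to each incident vertex (with a bonus on degree-$5$ incidences), and each vertex of degree $\ge 7$ donates a fixed amount to each adjacent vertex of degree $\le 6$ (with more going to degree-$5$ neighbors). The main obstacle, and the core use of the structural constraints (C) and (D), is verifying that degree-$5$ and degree-$6$ vertices finish with charge at least $\tfrac{1}{12}$. For a degree-$5$ vertex $v$ with $t$ triangular incident faces, the $t$ triangle-edges span a subgraph of the cyclic $C_5$ on $v$'s neighbors; by~(C) the degree-$\le 6$ neighbors of $v$ form an independent set in this subgraph, so the count $k_7$ of degree-$\ge 7$ neighbors is at least the minimum vertex cover of the triangle-edge subgraph, yielding $k_7\ge 3$ when $t=5$, $k_7\ge 2$ when $t\in\{3,4\}$, and so on. The charge received from $v$'s $5-t$ non-triangular incident faces, together with the $k_7$ contributions from high-degree neighbors, then suffices. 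A parallel but more delicate analysis, simultaneously invoking (C) and (D), handles degree-$6$ vertices, where (D) prevents too many degree-$5$ neighbors from draining a single donor. Choosing constants that close every case—in particular controlling degree-$7$ donors that may border many degree-$5$ recipients—is the fiddly part of the argument; the essential structural input is the vertex-cover lower bound on $k_7$ forced by~(C).
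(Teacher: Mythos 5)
Your high-level plan---contradiction plus discharging, with structural facts (A)--(D) extracted from the absence of a $4$-island of size at most $3$---matches the paper in spirit, and your constraints (A), (B), (C), (D) are correct consequences (indeed, (A)$+$(B)$+$(D) is equivalent to the paper's ``no path on at most $3$ vertices of degree $\le 6$ with both ends of degree $5$''). The Euler-formula bookkeeping with $\mu(v)=d(v)-6$, $\mu(f)=2d(f)-6$ is also fine. However, there is a real structural idea you are missing, and it is precisely the idea that makes the paper's proof close out cleanly.

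The paper first makes the embedding \emph{edge-maximal}. This ensures that, around every vertex $v$, the neighbors of $v$ can be listed in a circular order in which consecutive neighbors are \emph{adjacent in $G$} (even though the embedding need not be a triangulation). Consequently, the triangle constraint (C) and the path constraint apply to every pair of consecutive neighbors, for every vertex, always. This lets the paper work with vertex charges only (faces are discarded at the start since every face has degree $\ge 3$), with three simple rules, and every case---degree $5$, $6$, $7$, $\ge 8$---closes in a few lines. In particular, for a degree-$7$ vertex with three degree-$5$ neighbors $x_1,x_3,x_5$ in circular order, edge-maximality forces $x_1x_2$ and $x_2x_3$ to be edges, so $x_2$ (and similarly $x_4$) must have degree $\ge 7$, which controls how much charge a degree-$7$ vertex can lose.

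Your proposal does not perform edge-maximalization, so consecutive neighbors of $v$ need not be adjacent, and to compensate you introduce face charges and face-to-vertex rules. That is a legitimate alternative design, but it is genuinely harder: the vertex-cover bound on $k_7$ that you describe now depends on the number $t$ of triangular faces at $v$, and the non-triangular faces must make up the deficit. You explicitly flag that the degree-$6$ and degree-$\ge 7$ analyses are ``delicate'' and ``fiddly'' and leave them unverified---but this is exactly where the argument lives or dies. For example, a degree-$7$ vertex all of whose incident faces are triangular receives nothing from faces, so you would need the structural constraints alone to limit its loss; in that all-triangular case edge-maximality-style reasoning works locally, but in mixed cases the constants have to be chosen carefully, and nothing in the sketch demonstrates a consistent choice exists. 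As written, the key verification is absent, so this is a gap rather than a complete alternative proof. The lesson to take from the paper: the edge-maximalization step (justified by ``if a supergraph has a small $4$-island so does $G$'') is the device that removes the face bookkeeping entirely and makes the constants easy.
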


\begin{proof}
For the sake of contradiction, we assume that there exists a $\chi$
and a connected graph $G$ that can be embedded on a surface of Euler
characteristic $\chi$, and with more than $-72 \chi$ vertices, but
without any 4-island of size at most 3. We choose such a graph $G$ in
such way that the integer $\chi$ is maximal. By maximality of $\chi$,
$G$ has no embedding on a surface with higher Euler characteristic,
and then using~\cite[Propositions 3.4.1 and 3.4.2]{MoTh} we can assume
that $G$ has a cellular embedding in $\Sigma$ (in the non-orientable
case we use the fact that $G$ is not a tree, which easily follows from the fact
that $G$ has no 4-island of size at most 3). In the remainder, by a
slight abuse of notation we identify $G$ with its embedding in
$\Sigma$. 

\smallskip

We can assume that the embedding of $G$ in $\Sigma$ is edge-maximal
(with respect to $G$ being a simple graph), since if a graph
obtained from $G$ by adding an edge contains a 4-island of size at
most 3, then so does $G$. In particular, we can assume that for every
vertex $v$ of $G$, there is a circular order on the neighbors of $v$
such that any two consecutive vertices in the order are adjacent in
$G$ (note that it does not necessarily mean that $G$ triangulates
$\Sigma$, since the edge between two consecutive neighbors of $v$
might not lie in a face containing $v$).

\smallskip

Since $G$ does not contain any 4-island of size at most 3, {\bf (1)}
$G$ has mininum degree at least 5; {\bf (2)} $G$ does not contain
any path of at most $3$ vertices of degree at most 6 in which the two
end-vertices have degree 5, and {\bf (3)} $G$ does not contain
any triangle whose vertices all have degree at most 6.

\smallskip

We now use the classical discharging method. First, every vertex $v$
of $G$ is assigned a charge $\rho(v)=d(v)-6$ (since $G$ is simple, all
faces have degree at least 3 and then by Euler's Formula, the sum of
the charge on all vertices is at most $-6 \chi$). Then, we locally
move the charge as follows: {\bf (R1)} Every vertex of degree at least
7 gives a charge of $\tfrac{1}{4}$ to every neighbor of degree 5,
{\bf (R2)} and also a charge of $\tfrac{1}{12}$ to every
neighbor of degree 6. {\bf (R3)} Every vertex of degree 6 gives a
charge of $\tfrac{1}{6}$ to every neighbor of degree 5.

\smallskip

We now prove that after the discharging phase, the charge of each
vertex is at least $\tfrac{1}{12}$.

\smallskip

Let $v$ be any vertex of degree 5 (recall that by {\bf (1)} $G$ has
mininum degree at least 5). By {\bf (2)}, each neighbor of $v$ has
degree at least 6, and by {\bf (3)} no two consecutive neighbors of
$v$ have both degree 6. It follows from rules {\bf (R1)} and {\bf
  (R3)} that $v$ receives a charge of at least $3\cdot
\tfrac{1}{4}+2 \cdot \tfrac{1}{6}=\tfrac{13}{12}$. The initial
charge of $v$ was $\rho(v)=-1$, so the new charge is
$\rho'(v)\ge -1+\tfrac{13}{12}=\tfrac{1}{12}$.

Let $v$ be a vertex of degree 6. By {\bf (3)}, no two consecutive
neighbors of $v$ have degree at most 6, so $v$ has at least 3
neighbors of degree at least 7 (from which it receives a charge of at
least $3\cdot \tfrac{1}{12}=\tfrac{1}{4}$ by rule {\bf (R2)}). On the
other hand, by {\bf (2)}, $v$ has at most one neighbor of degree 5, to
which it gives at charge of at most $\tfrac{1}{6}$ by rule {\bf
  (R3)}. The initial charge of $v$ was $\rho(v)=6-6=0$, so the new
charge is $\rho'(v)\ge \tfrac{1}{4}-\tfrac{1}{6}=\tfrac{1}{12}$.

Let $v$ be a vertex of degree 7. Assume first that $v$ has at most two
neighbors of degree 5. Then in this case $v$ gives a charge of at most
$2\cdot \tfrac{1}{4}+5 \cdot \tfrac{1}{12}=\tfrac{11}{12}$ by {\bf
  (R1)} and {\bf (R2)}. Assume now that $v$ has at least three
neighbors of degree 5, and let $x_1,x_2,\ldots,x_7$ be the neighbors
of $v$, in their circular order. Then, by {\bf (2)}, $v$ has precisely 3
neighbors of degree 5, say $x_1,x_3,x_5$ without loss of
generality. Moreover, it follows from {\bf (2)} again that $x_2,x_4$
have degree at least 7. Therefore, in this case $v$ gives a charge of
at most $3\cdot \tfrac{1}{4}+2 \cdot \tfrac{1}{12}=\tfrac{11}{12}$ by
{\bf (R1)} and {\bf (R2)}. In both cases, the new charge of $v$ is
$\rho'(v)\ge 7-6-\tfrac{11}{12}=\tfrac{1}{12}$.

Assume now that $v$ has degree $d\ge 8$. By {\bf (2)}, no two
consecutive neighbors have degree 5, so $v$ gives a charge of at
most $\tfrac{d}2\cdot \tfrac{1}{4}+\tfrac{d}2\cdot \tfrac{1}{12}=
\tfrac{d}{6}$ by {\bf (R1)} and {\bf
  (R2)}. Therefore, the new charge of $v$ is 
$\rho'(v)\ge d-6- \tfrac{d}{6}=d\cdot \tfrac{5}{6}-6\ge \tfrac{2}{3}$
whenever $d\ge 8$.

It follows that the charge of each vertex is
at least $\tfrac{1}{12}$. Therefore, $-6 \chi \ge \tfrac{n}{12}$ and
so $n\le -72 \chi$, contradicting our initial assumption that
$n> -72 \chi$.
\end{proof}

It was pointed out to us by a referee that Theorem~\ref{th:g3island}
is close from a result of Jendrol' and Voss~\cite{JV05}, who proved
that if $G$ has a cellular embedding in a surface of Euler
characteristic $\chi$, and has more than $-83 \chi$ vertices, then it
contains a vertex of degree at most 4, or a triangular face $f$ such
that the sum of the degrees of the vertices on $f$ is at most
18. Because of triangular faces with vertices of degree 5, 6 and 7
respectively, it seems that Theorem~\ref{th:g3island} and this result
are incomparable. More results on light subgraphs in graphs on
surfaces can be found in two surveys of Jendrol' and Voss~\cite{JV02,JV13}

\smallskip

Note that there exist planar graphs with minimum degree 5 in which the
degree 5 vertices are arbitrarily far apart. This shows that our bound
on the size of 4-islands is best possible, even in the case of planar
graphs. Theorem~\ref{th:g3island} has the following direct consequence.

\begin{thm}\label{th:g3col}
For any integer $\chi$, for any graph $G$ that can be embedded on a
surface of Euler characteristic $\chi$, and any 5-list assignment $L$,
$G$ has an $L$-coloring in which every monochromatic component has
size at most $\max(3,-72\chi)$. Moreover, all vertices except at most
$-72\chi$ of them lie in monochromatic components of size at most 3.
\end{thm}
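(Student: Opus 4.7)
The plan is to prove Theorem~\ref{th:g3col} by induction on $|V(G)|$, iteratively peeling off small 4-islands supplied by Theorem~\ref{th:g3island}. We may assume that $G$ is connected, since a disconnected graph splits into connected pieces each embeddable on the same surface and the argument applies componentwise.

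For the base case, assume $|V(G)| \le -72\chi$ (which forces $\chi<0$ if $G$ is nonempty). Assign each $v$ any color from $L(v)$: every monochromatic component has size at most $|V(G)| \le -72\chi \le \max(3,-72\chi)$, and trivially at most $|V(G)| \le -72\chi$ vertices lie in monochromatic components of size greater than $3$, so both conclusions hold.

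For the inductive step, assume $|V(G)| > -72\chi$. Theorem~\ref{th:g3island} yields a 4-island $X \subseteq V(G)$ with $|X| \le 3$. The graph $G - X$ is embeddable on the same surface and has strictly fewer vertices, so by the inductive hypothesis it admits an $L$-coloring in which every monochromatic component has size at most $\max(3,-72\chi)$ and at most $-72\chi$ vertices lie in monochromatic components of size greater than $3$. To extend to $X$, process each $v \in X$ separately: the 4-island property ensures $v$ has at most $4$ neighbors in $V(G)\setminus X$, forbidding at most $4$ colors, so because $|L(v)| \ge 5$ we can pick $c(v) \in L(v)$ distinct from the color of every external neighbor. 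Consequently no edge across the boundary of $X$ is monochromatic, and every monochromatic component of the extended coloring is either inherited verbatim from $G - X$ or is contained entirely in $X$ (hence of size at most $|X|\le 3$). Both invariants are therefore preserved.

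The only real technical point is the matching of parameters: a $4$-island combined with lists of size $5$ leaves at least one free color at every vertex of $X$, which simultaneously extends the coloring and severs the boundary edges from becoming monochromatic. Apart from that balance, the proof is a clean induction on $|V(G)|$, with Theorem~\ref{th:g3island} doing all the structural heavy lifting.
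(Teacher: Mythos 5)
Your proof follows the same strategy as the paper's: induction on $|V(G)|$, using Theorem~\ref{th:g3island} to extract a $4$-island $X$ of size at most $3$, applying the induction hypothesis to $G - X$, and extending greedily since each vertex of $X$ has at most $4$ colored neighbors outside $X$ and lists of size $5$. The base case, the extension step, and the observation that boundary edges of $X$ become bichromatic are all correct and match the paper.

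The one genuine weak point is your reduction to the connected case. You justify it by saying ``a disconnected graph splits into connected pieces each embeddable on the same surface and the argument applies componentwise.'' This does not establish the second (``moreover'') assertion: if $G$ has $k$ components and you apply the connected-graph statement to each component with the \emph{same} Euler characteristic $\chi$, each component is only guaranteed to have at most $-72\chi$ vertices in monochromatic components of size greater than $3$, so the total could be as large as $-72k\chi$, not $-72\chi$. (A repair along these lines would need the additivity of Euler genus over components, which you do not invoke.) Moreover, you need connectedness only to apply Theorem~\ref{th:g3island}, yet the induction hypothesis is then applied to $G - X$, which may itself be disconnected; so the reduction cannot simply sit at the top of the argument. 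The paper sidesteps all of this with a cleaner device: it passes to an \emph{edge-maximal} supergraph $G'$ on the same surface. Since restricting a valid coloring of $G'$ to $G$ only shrinks monochromatic components, proving the theorem for $G'$ proves it for $G$, and edge-maximality forces $G'$ to be connected, so Theorem~\ref{th:g3island} applies directly. Replacing your componentwise remark by this edge-maximality observation closes the gap and leaves the rest of your argument intact.
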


\begin{proof}
Let $G$ be a graph that can be embedded on a surface $\Sigma$ of Euler
characteristic $\chi$, and let $L$ be any 5-list assignment. The proof
proceeds by induction on the number of vertices of $G$. If $G$
contains at most $-72\chi$ vertices, then the theorem is certainly
true. Assume now that $G$ has more than $-72\chi$ vertices. We can
assume that the embedding of $G$ in $\Sigma$ is edge-maximal, since
proving the theorem for a supergraph of $G$ also proves it for $G$. In
particular, we can assume that $G$ is connected, and therefore apply
Theorem~\ref{th:g3island}. It follows that $G$ contains a 4-island $X$
of size at most $3$.Then by the induction hypothesis, the graph
$G\setminus X$ has an $L$-coloring such that each monochromatic
component has size at most $\max(3,-72 \chi)$, and all vertices
except at most $-72\chi$ of them lie in monochromatic components of
size at most 3. We extend this coloring to $G$ by choosing, for each
vertex $v$ of $X$, a color from $L(v)$ that is distinct from that of
its neighbors outside $X$ (if any). The coloring obtained is an
$L$-coloring in which every monochromatic component has size at most
$\max(3,-72 \chi)$. Moreover, all vertices except at most $-72\chi$
of them lie in monochromatic components of size at most 3. This concludes the
proof.
\end{proof}




Cushing and Kierstead~\cite{CK10} proved that for every planar graph
$G$ and every 4-list assignment $L$ to the vertices of $G$, there is
an $L$-coloring of $G$ in which each monochromatic component has size
at most 2. Hence, Theorem~\ref{th:g3col} restricted to planar graphs
is significantly weaker than their result. We conjecture the following:

\begin{conj}\label{conj:g3col}
There is a function $f$ such that for any integer $\chi$, for any
graph $G$ that can be embedded on a surface of Euler characteristic
$\chi$, and any 4-list assignment $L$, $G$ has an $L$-coloring in
which every monochromatic component has size at most $f(\chi)$.
\end{conj}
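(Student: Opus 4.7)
The natural plan is to mirror the strategy used to derive Theorem~\ref{th:g3col} from Theorem~\ref{th:g3island}: prove a structural result saying that every sufficiently large graph embedded on a surface of Euler characteristic $\chi$ contains a bounded-size configuration $H$ that is \emph{reducible} for $4$-list-coloring, in the sense that any $L$-coloring of $G\setminus H$ whose monochromatic components have size at most $f(\chi)$ can be extended to an $L$-coloring of $G$ with the same bound on component sizes. The conjecture then follows by induction on $|V(G)|$: if $|V(G)|$ is small, any $L$-coloring works; otherwise, delete $H$, color $G\setminus H$ inductively, and extend.

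The cleanest candidate for a reducible configuration is a \emph{$3$-island of bounded size}: if each vertex $v$ of a $3$-island $X$ has at most $3$ neighbors outside $X$ and $|L(v)|\ge 4$, then one can always choose a color for $v$ avoiding the (at most $3$) external colors, and the new monochromatic components created inside $X$ have size at most $|X|$. So the plan reduces to proving an analogue of Theorem~\ref{th:g3island}: every connected graph embedded on a surface of Euler characteristic $\chi$ with more than $C(\chi)$ vertices contains a $3$-island of size at most some constant $s(\chi)$. The proof would follow the discharging template above, starting from charges $\rho(v)=d(v)-6$ (summing to $-6\chi$) and shifting charge from high-degree vertices toward clusters of low-degree vertices, so that the absence of a small $3$-island forces every final charge to be positive, contradicting $\sum\rho(v)=-6\chi$ when $|V(G)|$ is large.

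The main obstacle is that the forbidden configurations in this setting are substantially more restrictive than in Theorem~\ref{th:g3island}: forbidding $3$-islands of size $1$ only forces minimum degree $\ge 4$, and a triangle of three degree-$5$ vertices is already a $3$-island of size $3$, which seems unavoidable in large triangulated surfaces. Consequently, a pure island argument is unlikely to suffice, and one probably has to import the more intricate reducible configurations of Cushing and Kierstead~\cite{CK10} -- those used in their planar $4$-list-coloring result with monochromatic components of size at most $2$ -- into the surface setting. Their reducibility claims are inherently local and should transfer with little change; the topology of the surface enters only through the initial discharging deficit $-6\chi$, which can be absorbed into $f(\chi)$. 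The hardest part will be rechecking each case of their discharging analysis in the presence of this extra deficit, and verifying that every large-enough graph on $\Sigma$ still contains one of their configurations; a secondary technical point is to show that the configurations remain reducible under the weaker target of components of bounded size $f(\chi)$ rather than the strict bound $2$ of the planar case, which should actually simplify matters and perhaps permit a shorter list of configurations than in~\cite{CK10}.
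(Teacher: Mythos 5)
You should first note that the statement you were asked to prove is posed in the paper as an open \emph{conjecture} (Conjecture~\ref{conj:g3col}), not a theorem; the authors give no proof of it. Indeed, immediately after stating it they write that they believe every large graph of bounded genus contains a $3$-island of bounded size---which would yield the conjecture via exactly the list-coloring reduction you describe---but that they could not prove this even in the planar case. So there is no paper proof to compare your attempt against, and your reduction from ``bounded-size $3$-island'' to the conjecture is correct and is the very one the authors have in mind.

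As a proof attempt, however, your proposal does not establish anything: both of the essential steps---a discharging argument producing a bounded-size $3$-island on every surface, or a transplantation of the Cushing--Kierstead reducible configurations from the plane to higher genus---are precisely what the authors state they were unable to do, and you do not carry them out either (``would follow the discharging template'', ``should transfer with little change''). There is also a confused step in your reasoning about the obstacle. A triangle of three degree-$5$ vertices \emph{is} a $3$-island of size $3$; so if such triangles were really unavoidable in large graphs of bounded genus, that would \emph{prove} the island lemma rather than obstruct it. But they are not unavoidable (for instance $6$-regular triangulations of the torus have no degree-$5$ vertex at all, and one can build arbitrarily large minimum-degree-$5$ planar triangulations in which the degree-$5$ vertices are pairwise far apart), so the inference drawn from them is doubly off. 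The genuine obstacle, which your sentence is groping towards but does not state correctly, is this: assuming a large embedded graph has \emph{no} small $3$-island yields local constraints (minimum degree $\ge 4$; no adjacent pair both of degree $\le 4$; no triangle with all degrees $\le 5$; no path of three vertices with end degrees $\le 4$ and middle degree $\le 5$; etc.) that are strictly weaker than those available in the $4$-island proof of Theorem~\ref{th:g3island}, and no one has exhibited a discharging scheme whose final charges are bounded below by a positive constant under only these weaker hypotheses. Your outline does not close that gap, and neither does the paper.
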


We believe that any large graph of bounded genus contains a 3-island
of bounded size, which would directly imply
Conjecture~\ref{conj:g3col}, but we have not been able to prove it,
even in the case of planar graphs.

Kawarabayashi and Thomassen~\cite{KT12} proved that every graph that
has an embedding on a surface of Euler characteristic $\chi$ can be
colored with colors $1,2,3,4,5$, in such a way that each color $i\le
4$ is an independent set, while color 5 induces a graph in which each
connected component contains $O(\chi^2)$ vertices. A small variation
in the proof of Theorem~\ref{th:g3col} shows the following corollary.

\begin{cor}
Every graph that has an embedding on a surface of Euler characteristic
$\chi$ can be colored with colors $1,2,3,4,5$, in such a way that each
color $i\le 4$ induces a graph in which each connected components has
size at most 3, while color 5 induces a graph in which each connected
component contains $O(|\chi|)$ vertices.
\end{cor}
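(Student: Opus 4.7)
The plan is to reuse, essentially verbatim, the induction in the proof of Theorem~\ref{th:g3col}, with two small adjustments. I would first specialise the $5$-list assignment to $L(v)=\{1,2,3,4,5\}$ for every vertex, and second, in the base case of the induction (when $|V(G)|\le -72\chi$), colour \emph{every} vertex of the resulting ``remainder'' $R$ with colour $5$ instead of with an arbitrary colour from its list. The inductive step is unchanged: Theorem~\ref{th:g3island} yields a $4$-island $X$ of size at most $3$, the induction is applied to $G\setminus X$, and the coloring is extended by giving each $v\in X$ a colour in $\{1,2,3,4,5\}$ different from the colours of its (at most $4$) neighbours in $G\setminus X$.

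The analysis then separates the two kinds of colour classes. For a colour $i\le 4$: any colour-$i$ vertex must live in some removed island $X$ (the remainder is uniformly coloured $5$), and by the extension rule no neighbour of $v$ outside $X$ shares colour $i$, so the colour-$i$ component through $v$ is confined to $X$ and has size at most $|X|\le 3$. For colour $5$: if $v\in R$, then every island neighbour of $v$ was coloured so as to avoid colour $5$, so the colour-$5$ component of $v$ stays inside $R$ and has size at most $|R|\le -72\chi$; if instead $v$ belongs to a removed island $X$, then $v$ was assigned colour $5$ only because none of its neighbours in $G\setminus X$ had colour $5$, so the colour-$5$ component of $v$ is trapped inside $X$ and has size at most $3$. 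Combining the two cases gives the required $\max(3,-72\chi)=O(|\chi|)$ bound.

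The only delicate point is a directional one: when extending to an island $X$, the extension rule only explicitly forces $v\in X$ to avoid the colours of neighbours in $G\setminus X$, not those of neighbours in earlier-removed (and hence later-coloured) islands. One needs to check that the missing direction is automatic. This is indeed the case: if $u$ lies in an earlier-removed island $X'$, then at the moment $X'$ is coloured the vertex $v$ belongs to the graph on which the inductive hypothesis was invoked, so $v$ is among the already-coloured neighbours whose colour $u$ is forced to avoid. With this symmetric avoidance recorded, both component bounds fall out of the recursive structure with no further work.
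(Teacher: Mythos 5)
Your proof is correct and is exactly the ``small variation'' of the proof of Theorem~\ref{th:g3col} that the paper alludes to without spelling out: fix $L(v)=\{1,\dots,5\}$, colour the at most $-72\chi$ leftover vertices uniformly with colour $5$, and then observe that the island-peeling extension rule (together with the symmetry you note — a vertex in an island coloured later still avoids the colours of earlier-coloured islands, since those were already present when it was coloured) confines every colour-$i$ ($i\le 4$) component to a single island of size at most $3$ and every colour-$5$ component to either a single island or the remainder. Nothing to add.
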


We now prove a triangle-free version of Theorem~\ref{th:g3island}.

\begin{thm}\label{th:g4island}
Let $\chi$ be an integer, and let $G$ be a connected triangle-free
graph that can be embedded on a surface of Euler characteristic
$\chi$. If $G$ has more than $-72 \chi$ vertices, then it contains a
2-island of size at most 10.
\end{thm}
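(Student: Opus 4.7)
My plan is to closely mirror the proof of Theorem~\ref{th:g3island}, adapted to the triangle-free setting. I would begin by assuming, for contradiction, the existence of a counterexample $G$ and choosing it with $\chi$ maximal. This guarantees (via \cite[Prop.~3.4.1--3.4.2]{MoTh}, combined with the fact that the absence of size-1 2-islands forces minimum degree at least 3 and hence a cycle) that $G$ has a cellular embedding in a surface $\Sigma$ of Euler characteristic $\chi$. One can also assume $G$ is edge-maximal among simple triangle-free graphs embeddable in $\Sigma$: adding a non-edge only increases external degrees in any candidate set, so it preserves the absence of small 2-islands.

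Next I would record the forbidden configurations implied by the hypothesis: no vertex of degree at most 2 (size-1); no edge between two degree-3 vertices (size-2); no path on three degree-3 vertices (size-3); and, more generally, no connected set $X$ of at most 10 vertices in which every vertex has at most 2 neighbors outside $X$. Unpacking this last condition for small $X$ gives strong local constraints, essentially forcing vertices of degree 3 and 4 to be well-separated by vertices of degree at least 5, where "well-separated" is quantified by the size-10 threshold.

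For the discharging phase I would use both vertex and face charges: $\rho(v) = d(v) - 4$ for each vertex and $\rho(f) = d(f) - 4$ for each face. By Euler's formula the total initial charge equals exactly $-4\chi$, and since $G$ is triangle-free every face satisfies $d(f)\ge 4$, so all face charges start non-negative. I would then design rules sending charge from faces of length at least 5 and from vertices of degree at least 5 toward the low-degree vertices, tuning the rules so that after redistribution every face retains non-negative charge and every vertex ends with charge at least $\tfrac{1}{18}$. Summing then yields $n/18\le -4\chi$, i.e.\ $n\le -72\chi$, the desired contradiction.

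The main obstacle lies in the verification at vertices of degree 3. Such a vertex starts with charge $-1$ and must collect at least $\tfrac{19}{18}$; a 4-face incident to it contributes nothing, and its neighbors (all of degree at least 4) may themselves be tight on charge, so incoming charge must be routed from faces of length at least 5 and from high-degree vertices possibly two or three edges away. Arranging the rules so that no donor is ever over-drained --- particularly vertices of degree at least 5 that may serve several degree-3 neighbors, and degree-4 vertices that must also reach the target charge --- will require a careful case analysis that leverages the full strength of the size-10 bound in the 2-island hypothesis, in contrast with the much smaller size-3 bound used in Theorem~\ref{th:g3island}.
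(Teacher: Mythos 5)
Your framework coincides with the paper's: the same charges $\rho(v)=d(v)-4$ and $\rho(f)=d(f)-4$ summing to $-4\chi$, the observation that every face has degree at least $4$, the correct identification of the forbidden local configurations (in particular, the paper's version is: no path on at most $10$ vertices of degree at most $4$ with degree-$3$ endpoints), and the target of pushing every vertex up to charge at least $\tfrac{1}{18}$ so that $n/18\le -4\chi$. But the proof stops there: no discharging rule is actually written down, and the last paragraph explicitly defers the "careful case analysis" to later. That missing rule and its verification \emph{are} the proof, so as it stands this is a plan, not an argument.

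What the paper does at that point is non-trivial and not recoverable from your sketch. It fixes an orientation for each face and, for each occurrence of a degree-$3$ vertex $v$ on a boundary walk, follows a maximal facial walk from $v$ whose inner vertices have degree exactly $4$: if there are at least $3$ inner vertices the face donates $\tfrac{1}{6}$ to $v$, otherwise the far endpoint $u$ (forced by the forbidden path to have degree $\ge 5$) donates $\tfrac{1}{6}$. Each degree-$3$ vertex then collects exactly $6\cdot\tfrac16=1$ and ends at $0$; degree-$4$ vertices stay at $0$; verifying non-negativity of degree-$\ge 5$ vertices and faces uses the full strength of the no-$10$-vertex-$2$-island assumption (e.g.\ to show a high-degree vertex cannot be double-charged through the same neighbor unless that neighbor has degree $3$, and that a face's donating walks are pairwise disjoint). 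A second redistribution ($\tfrac{1}{18}$ from each degree-$\ge5$ vertex to each incident face, then $\tfrac{1}{54}$ from each face to each low-degree occurrence) is what finally lifts degree-$3$ and degree-$4$ vertices from $0$ to $\tfrac{1}{18}$. None of this appears in your proposal, and without it the bound $n\le -72\chi$ is unsupported. One further small point: the edge-maximality reduction you invoke is sound but, unlike in Theorem~\ref{th:g3island}, it yields nothing here — in a triangle-free graph it does not give the circular order of pairwise-adjacent neighbors that made it useful in the dense case, and the paper does not use it for this theorem.
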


\begin{proof}
The proof is similar to that of Theorem~\ref{th:g3island}. We consider
a counterexample $G$ (we can assume that it has a cellular embedding
on some surface of Euler characteristic $\chi$). Since $G$ does not contain any 2-island of size at
most 10, {\bf (1)} $G$ has mininum degree at least 3, and {\bf (2)}
$G$ does not contain any path of at most $10$ vertices of degree at
most 4 in which the two end-vertices have degree 3 (the two
end-vertices are allowed to coincide).

\smallskip

We now use the discharging method. First, every vertex $v$
of $G$ is assigned a charge $\rho(v)=d(v)-4$, and every face $f$ of
$G$ is assigned a charge $\rho(f)=d(f)-4$ (by Euler's Formula, the sum
of the charge on all vertices and faces is equal to $-4 \chi$). Then, we
locally move the charge as described below.

We first choose, for every face $f$ of $G$, an orientation of $f$ and
set it as the positive orientation of $f$ (we do not need to have a
consistent choice of positive orientations, therefore $\Sigma$ is not
required to be orientable). For any face $f$ of $G$, for any
orientation of $f$ (positive or negative), and for any occurrence of a
vertex $v$ of degree 3 in a boundary walk of $f$ according to the
chosen orientation, take a maximal facial walk of $f$ (a walk
consisting only of vertices and edges incident to $f$) starting at $v$
and going around $f$ in the prescribed orientation of $f$, such that
the inner vertices of the walk have degree precisely 4. Let $u$ be the
other end-vertex of the walk. If the walk contains at least $3$ inner
vertices, then the face $f$ gives a charge of $\tfrac16$ to
$v$. Otherwise {\bf (1)}, {\bf (2)} and the maximality of the walk
imply that $u$ has degree at least 5. In this case $u$ gives a charge
of $\tfrac16$ to $v$.

\smallskip

We now prove that after the discharging phase, all vertices and faces
have nonnegative charge.

\smallskip

Let $v$ be any vertex of degree 3 (recall that by {\bf (1)} $G$ has
mininum degree at least 3). Then $v$ appears 6 times in the union
of all boundary walks of faces of $G$ (for each face, we consider a
boundary walk in the positive orientation and a boundary walk in the
negative orientation of the face), and therefore receives 6 times a
charge of $\tfrac16$. The initial charge of $v$ was $\rho(v)=-1$, so
the new charge is $\rho'(v)=-1+6\cdot \tfrac16=0$.

Vertices of degree $4$ start with an initial charge of $4-4=0$ and do
not give or receive any charge. Now let $v$ be a vertex of degree
$d\ge 5$. Consider the facial walks through which it gives a charge of
$\tfrac16$ to some vertices of degree 3, and observe that if a
neighbor $u$ of $v$ is right after $v$ in more than one such facial walk,
then $u$ has degree 3 (and receives exactly $2\cdot\tfrac16=\tfrac13$
from $v$). For if $u$ had degree at least 4 and was just after $v$ in two
facial walks starting at $v$ as defined above, $u$ would have degree
exactly 4 and there would be two paths starting at $u$, each
containing at most $1$ inner vertex (of degree 4) and finishing at a
vertex of degree 3, contradicting {\bf (2)}. It follows that
$\rho'(v)\ge d-4-2d\cdot \tfrac16=\tfrac23 d-4\ge \tfrac{2d}{21}$
whenever $d\ge 7$. If $d=6$, then observe that $v$ is adjacent to at
most three vertices of degree 3 (otherwise $G$ would contain a
2-island of size 5). Therefore, in this case we have $\rho'(v)\ge
2-3\cdot \tfrac13-3\cdot\tfrac16=\tfrac{1}{2}=\tfrac{d}{12}$. If
$d=5$, then the walks through which $v$ gives some charge contain at
most two neighors of $v$, since otherwise $G$ would contain a 2-island
of size at most 10. It follows that in this case we have $\rho'(v)\ge
1-2\cdot \tfrac13=\tfrac{1}{3}=\tfrac{d}{15}$.

Let $f$ be a face of degree $d$ in $G$. If $d=4$, no vertex receives
any charge from $f$, since otherwise $f$ contains a vertex of degree 3
and three vertices of degree 4, and then the vertices of $f$ form a
2-island of size 4. It follows that if $d=4$, $\rho'(f)=0$. Assume now
that $d\ge 5$. For each occurrence of a vertex $v$ of degree 3 that
receives $\tfrac16$ from $f$ in the positive orientation, let $A^+(v)$
be the set consisting of the three vertices of degree exactly 4
following $v$ in the positive orientation of $f$ (the existence of
these vertices follows from the definition of our discharging
procedure). Similary, define $A^-(v)$ for each occurrence of a vertex
$v$ of $f$ receiving some charge from $f$ in the negative orientation
of $f$. Observe that all the sets $A^+(v)$ and $A^-(u)$ are pairwise
disjoint: for a pair of sets $A^+(u)$ and $A^+(v)$, or $A^-(u)$ and
$A^-(v)$, this follows from the definition of these sets and the fact
that they exist only if $u$ and $v$ have degree three. For each pair
$A^+(u),A^-(v)$, if these two sets have non-empty intersection then
$G$ contain a 2-island of size at most 7, which is a contradiction. It
follows that $f$ gives at most $\tfrac16 \cdot\floor{\tfrac{d}3}$.
Since $d\ge 5$ and the face $f$ starts with an initial charge of
$\rho(f)=d-4$, in this case the new charge is $\rho'(f)\ge
d-4-\tfrac16\floor{\tfrac{d}3}\ge \tfrac{d}{6}$.\\

We proved that all vertices and faces have nonnegative charge (if
$G$ is projective planar this is already a contradiction since in this case the total
charge is negative). Moreover, vertices $v$ with degree $d\ge 5$
have a charge $\rho'(v)\ge \tfrac{d}{15}$, while faces $f$ with degree
$d\ge 5$ have a charge $\rho'(v)\ge \tfrac{d}{6}$. We now redistribute
the charge as follows: every vertex of degree at least 5 gives
$\tfrac1{18}$ to every incident face, and then every face $f$ gives
$\tfrac1{54}$ to every occurrence of a vertex of degree 3 or 4 in a
boundary walk of $f$. Each face of degree $d\ge 5$ is left with at
least $\tfrac{d}{6}-\tfrac{d}{54}\ge 0$. Note that a face of degree 4
is incident to at most 3 vertices of degree 3 or 4 (otherwise $G$
would contain a 2-island of size 4), therefore such a face starts with a
charge of 0, receives $\tfrac1{18}$ from a vertex of degree at least
5, and gives at most $3\times \tfrac1{54}=\tfrac1{18}$ to the
remaining vertices of its boundary. Therefore, each face has nonnegative
charge.

Each vertex of degree $d\ge 5$ starts with a charge of at least
$\tfrac{d}{15}$ and gives at most $\tfrac{d}{18}$, thus the remaining
charge is at least $\tfrac{d}{90}\ge\tfrac1{18}$. Each vertex $v$ of degree 3 or 4
starts with a charge of 0 and receives $\tfrac1{54}$ from each
incident face, for a total of at least $\tfrac1{18}$ (note that since
faces give charge to every occurrence of a vertex on their boundary,
this holds even if the number of faces incident to $v$ is less than
$d(v)$ because then $v$ appears several times in a boundary walk of
some face). It follows that the charge of each vertex is
at least $\tfrac1{18}$. Since all faces have nonnegative charge, we have
$-4 \chi \ge \tfrac{n}{18}$ and
so $n\le -72 \chi $, contradicting our initial assumption that
$n>-72 \chi$.
\end{proof}

Our bound on the size of 2-islands is not optimal in the case of
planar graphs: it is possible to show, using a more detailed (and
significantly longer) analysis, that every triangle-free planar graph
contains a 2-island of size at most 5, which is best possible. It is
likely that the result extends to higher surfaces as well, but we
preferred to present a short and simple proof of a slightly weaker
result instead (the most important part of the theorem being that the
island is a 2-island).

Euler's formula shows that every triangle-free planar graph $G$
contains a vertex of degree at most 3. It follows that for any 4-list
assignment $L$, $G$ has a proper $L$-coloring. On the other hand,
Voigt~\cite{V95} proved that there is a triangle-free planar graph
$G$ and a 3-list assignment $L$ such that $G$ is not
$L$-colorable. Using the same proof as that of Theorem~\ref{th:g3col},
Theorem~\ref{th:g4island} has the following direct consequence (which
seems to have been previously unknown even for planar graphs).

\begin{thm}\label{th:g4col}
For any integer $\chi$, for any triangle-free graph $G$ that can be
embedded on a surface of Euler characteristic $\chi$, and any 3-list
assignment $L$, $G$ has an $L$-coloring in which every monochromatic
component has size at most $\max(10,-72\chi)$. Moreover, all vertices
except at most $-72\chi$ of them lie in monochromatic components of
size at most 10.
\end{thm}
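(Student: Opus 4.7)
The plan is to mirror the inductive scheme of Theorem~\ref{th:g3col}, substituting Theorem~\ref{th:g4island} for Theorem~\ref{th:g3island}. I proceed by induction on $|V(G)|$. In the base case $|V(G)|\le -72\chi$, any $L$-coloring witnesses the theorem: every monochromatic component has size at most $|V(G)|\le -72\chi \le \max(10,-72\chi)$, and all at most $-72\chi$ vertices can be charged to the exception budget.

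For the inductive step, assume $|V(G)|>-72\chi$. The first move is to reduce to the case where $G$ is connected. As in the proof of Theorem~\ref{th:g3col}, it suffices to prove the theorem for an edge-maximal triangle-free supergraph $G'$ of $G$ on $\Sigma$, since the restriction of a valid $L$-coloring of $G'$ to $G$ can only split monochromatic components into smaller ones. Any two vertices in distinct components of a graph share no common neighbor, so any edge joining them keeps the graph triangle-free, and such an edge can always be added on $\Sigma$. Hence edge-maximality forces $G$ to be connected, and Theorem~\ref{th:g4island} produces a 2-island $X\subseteq V(G)$ of size at most $10$.

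Apply the induction hypothesis to the triangle-free graph $G\setminus X$, which is still embedded on $\Sigma$: it admits an $L$-coloring in which every monochromatic component has size at most $\max(10,-72\chi)$ and at most $-72\chi$ vertices lie in components of size larger than $10$. Extend this coloring to $X$ by picking, for each $v\in X$, a color $c(v)\in L(v)$ distinct from the colors of the (at most $2$) neighbors of $v$ outside $X$; this is possible because $|L(v)|\ge 3$. By construction no monochromatic edge crosses from $X$ to $G\setminus X$, so every monochromatic component of $G$ lies either entirely in $X$ (and hence has size at most $|X|\le 10$) or entirely in $G\setminus X$ (unchanged by the extension). Both the size bound $\max(10,-72\chi)$ and the bound of $-72\chi$ on exceptional vertices are therefore preserved.

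The only delicate point is the reduction to the connected case, since here (unlike in Theorem~\ref{th:g3col}) the added edges must be compatible with triangle-freeness. The observation that an edge joining two distinct components cannot possibly close a triangle disposes of this obstacle. Once $G$ is connected, the rest is essentially automatic: Theorem~\ref{th:g4island} produces the island, and two color avoidances per vertex comfortably fit inside lists of size $3$.
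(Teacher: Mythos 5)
Your proof is correct and follows essentially the same route as the paper, which simply invokes ``the same proof as that of Theorem~\ref{th:g3col}'' with Theorem~\ref{th:g4island} supplying the 2-island of size at most 10. Your explicit observation that an edge added between distinct components can never close a triangle --- so that edge-maximality among triangle-free supergraphs still forces connectivity --- is a useful clarification of a step the paper leaves implicit.
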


Note that the size of the lists in Theorem~\ref{th:g4col} is best
possible: Esperet and Joret~\cite{EJ13} proved that triangle-free
planar graphs $G$ cannot be 2-colored such that each monochromatic
component has bounded size. We conjecture the following:

\begin{conj}\label{conj:g5col}
There is a function $f$ such that for any integer $\chi$, for any
graph $G$ of girth at least 5 that can be embedded on a surface of
Euler characteristic $\chi$, and any 2-list assignment $L$, $G$ has an
$L$-coloring in which every monochromatic component has size at most
$f(\chi)$.
\end{conj}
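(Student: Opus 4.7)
The plan is to parallel the strategy used for Theorem~\ref{th:g4island} and Theorem~\ref{th:g4col}: first isolate a purely structural statement, then derive the list-coloring conjecture by induction. Concretely, I would try to prove an island analogue: every connected graph $G$ of girth at least $5$ that embeds in a surface of Euler characteristic $\chi$ and has more than $h(\chi)$ vertices contains a $1$-island of size at most $h(\chi)$, for some fixed function $h$. Once this is in hand, the conjecture follows by exactly the reduction used to derive Theorem~\ref{th:g4col} from Theorem~\ref{th:g4island}: find such a $1$-island $X$, color $G\setminus X$ by induction, and then color each $v\in X$ using one of the two colors of $L(v)$ distinct from the color of its (at most one) external neighbor. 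Every monochromatic component meeting $X$ is contained in $X$, hence has size at most $h(\chi)$, and $f(\chi)$ can be taken to be $\max(h(\chi),-c\chi)$ for an absolute constant $c$.

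For the structural step, the natural tool is discharging. Initial charges $d(v)-4$ for vertices and $d(f)-4$ for faces sum to $-4\chi$ by Euler's formula, and since the girth is at least $5$ every face starts with charge at least $1$. The absence of a small $1$-island forces minimum degree at least $2$, rules out cycles all of whose vertices have degree $2$, bounds the length of any path of degree-$2$ vertices, and more generally forbids "thin" subgraphs of bounded size whose vertices each have at most one neighbor in the rest of $G$. One would then design local rules that push charge from $5^+$-faces and from high-degree vertices along these bounded degree-$2$ chains until every vertex and every face ends nonnegative, yielding the usual contradiction of the form $-4\chi\ge c\cdot n$.

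The main obstacle, and presumably the reason the statement is only conjectured, is that a pentagonal face carrying several degree-$2$ vertices on its boundary has very little slack charge, and two such pentagons can share these degree-$2$ vertices, so any naive redistribution fails to close. A route I would try is to first suppress every maximal path of degree-$2$ vertices into a single weighted super-edge, carry out the discharging in the resulting multigraph of minimum degree at least $3$, and then lift the conclusion back to $G$; but the loss of girth under this contraction, together with the need to handle genuinely new configurations around degree-$3$ vertices flanked by two short degree-$2$ chains, is exactly where I would expect the argument to stall. Pushing through likely requires a new reducible-configuration idea, possibly of size comparable to the $16$ appearing in the authors' girth-$6$ result, that goes beyond anything developed in the proofs of Theorem~\ref{th:g3island} and Theorem~\ref{th:g4island}.
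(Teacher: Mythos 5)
This statement is an open \emph{conjecture} in the paper, not a theorem: the authors explicitly present it as Conjecture~\ref{conj:g5col} and then prove only the weaker girth-$6$ analogue (Theorems~\ref{th:g6island} and~\ref{th:g6col}). There is therefore no proof of it in the paper to compare against, and your proposal is appropriately a strategy sketch rather than a completed argument.

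Your sketch is the natural one, and it matches what the paper actually does one girth class higher: reduce to a structural ``every large girth-$\geq 5$ graph of Euler characteristic $\chi$ contains a small $1$-island'' claim, then feed it into the same induction used in Theorem~\ref{th:g3col}/Theorem~\ref{th:g4col}. The reduction step you describe is correct and routine given the island lemma. Your identification of the obstruction is also accurate: in the girth-$6$ proof the charge assignment $\rho(v)=2d(v)-6$, $\rho(f)=d(f)-6$ makes $6$-faces exactly neutral, so only the local structure near degree-$2$ chains matters; for girth $5$ one is forced to the $(d(v)-4,d(f)-4)$ normalization, a $5$-face starts with only $+1$, and long degree-$2$ chains shared between two pentagons eat that slack without producing a bounded-size reducible configuration. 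The contraction trick you mention (suppressing degree-$2$ paths into weighted super-edges) is a reasonable thing to try and is indeed where one expects trouble, since the contracted graph need no longer have girth $5$ and new local configurations around degree-$3$ hubs appear. None of this closes the argument, and you say so; that is the honest state of the problem. One further point worth recording: girth $5$ is the genuine boundary here, since Esperet and Joret~\cite{EJ13} show that girth $4$ (triangle-free) already fails for $2$ colors even in the plane, so the conjecture asks for the strongest girth hypothesis that could possibly work, which is part of why it resists the same machinery.
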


We now prove a weaker version of this conjecture, for graphs of girth
at least 6 (instead of 5).

\begin{thm}\label{th:g6island}
Let $\chi$ be an integer, and let $G$ be a connected graph of girth at
least six that can be embedded on a surface of Euler characteristic
$\chi$. If $G$ has more than $-357 \chi$ vertices, then it contains a
1-island of size at most 16.
\end{thm}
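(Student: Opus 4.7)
The plan is to follow the same discharging framework as Theorems~\ref{th:g3island} and~\ref{th:g4island}. I would assume by contradiction that there is a connected graph $G$ of girth at least~$6$, cellularly embedded (after choosing $\chi$ maximal among counterexamples) in a surface of Euler characteristic $\chi$, with more than $-357\chi$ vertices and no $1$-island of size at most~$16$. The absence of small $1$-islands immediately yields: \textbf{(1)} the minimum degree of $G$ is at least $2$, since any vertex of degree at most $1$ is a $1$-island of size $1$; \textbf{(2)} every maximal induced path of degree-$2$ vertices between two vertices of degree at least~$3$ contains either at most $1$ internal degree-$2$ vertex or at least $17$ of them, because any block of $k$ consecutive degree-$2$ vertices is a $1$-island of size $k$, and in particular no cycle of degree-$2$ vertices has length at most $16$. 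A similar argument refines this further: for instance, a degree-$3$ endpoint of a long thread cannot have a degree-$2$ vertex among its two other neighbors, since otherwise one could combine the endpoint, the extra degree-$2$ neighbor, and fourteen consecutive thread vertices into a forbidden $1$-island of size~$16$.

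Since the girth is at least~$6$, every face has degree at least~$6$, and I would assign the initial charges $\rho(v) = d(v) - 3$ and $\rho(f) = \tfrac{1}{2}(d(f) - 6)$, so that by Euler's formula the total charge is exactly $-3\chi$. The only initially negative objects are the degree-$2$ vertices, each with charge $-1$; degree-$3$ vertices and faces of degree $6$ sit at $0$. The goal is to redistribute the charge so that every vertex ends with charge at least $\tfrac{1}{119}$ and every face remains nonnegative, which yields $n/119 \le -3\chi$, i.e. $n \le -357\chi$, the desired contradiction. The discharging itself splits into two regimes: a \emph{lone} degree-$2$ vertex, whose two neighbors both have degree at least $3$, receives a fixed share from each of these neighbors via an edge-based rule; a \emph{threaded} degree-$2$ vertex, sitting inside a path of $\ge 17$ consecutive degree-$2$ vertices, is typically far from any higher-degree vertex, so it must instead be fed by its two incident faces. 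The crucial structural point is that such a thread contributes $\ell + 1$ edges to each of the two bordering faces, forcing their degrees up to at least $\ell + 2$ and their charges to at least $(\ell - 4)/2$, so a face-to-vertex rule that evenly distributes each face's $(d(f)-6)/2$ charge among the degree-$2$ vertices on its boundary covers most of the deficit. A final smoothing step, analogous to the second discharging phase in the proof of Theorem~\ref{th:g4island}, then guarantees the uniform lower bound $\tfrac{1}{119}$.

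The main obstacle will be the tight numerology around threads of length exactly $17$ bordered by two faces of degree exactly $19$, where both faces are minimally saturated by the thread and each supplies only its bare minimum; the remaining constant-size deficit must be absorbed by the two thread endpoints. Here the refined structural restrictions from the first step (such as the one highlighted above about a degree-$3$ endpoint having no degree-$2$ outside neighbor) become essential, because they ensure that each endpoint either has degree at least $4$ and thus strictly positive initial charge, or else is incident to a third face that is not heavily loaded and can spare some surplus. Verifying that these local accountings close in every boundary configuration, in particular handling the interaction between lone and threaded degree-$2$ vertices sharing a face of small degree and ruling out forbidden $1$-islands that mix low-degree endpoints with short pieces of threads, is the tedious but routine core of the argument.
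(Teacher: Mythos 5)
Your overall framework (normalize the embedding, derive structural constraints from the absence of small $1$-islands, set up a vertex/face charge summing to a multiple of $\chi$, discharge along facial walks, and close the constant at $357$) is the same as the paper's, and your structural observation {\bf (2)} is correct in spirit though weaker than the paper's — the paper forbids any path of at most $16$ vertices \emph{of degree at most $3$} with degree-$2$ endpoints, not just paths of degree-$2$ vertices, and this stronger form is what the discharging actually uses. However, there is a genuine gap in your plan that goes beyond the "tedious but routine" verification you defer.

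You propose to redistribute so that \emph{every} vertex ends with charge at least $\tfrac1{119}$, in analogy with the proof of Theorem~\ref{th:g4island}. In the girth-$6$ setting this uniform positivity is not achievable by a local rule, because the graph can contain large neutral patches: a vertex of degree $3$ starts at charge $0$, and so does each of its incident degree-$6$ faces and each of its neighbors of degree $3$. Unlike the triangle-free case, where a $4$-face with too many low-degree vertices is immediately a $2$-island and so every $4$-face owns a vertex of degree $\ge 5$ that feeds it, here the degree-$\ge 4$ vertex that a degree-$6$ face must contain can itself have charge exactly $0$ after the first phase (a degree-$4$ vertex with exactly two degree-$2$ neighbors), so there is no positive charge anywhere nearby to push to the degree-$3$ vertices. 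The paper handles this by \emph{not} making all charges positive: after the first discharging phase (with charges $2d(v)-6$ and $d(f)-6$, i.e.\ your charges scaled by $2$) it only establishes nonnegativity plus a linear lower bound $\rho' \ge d/14$ for the objects whose charge is nonzero, then counts "heavy" vertices — those with positive charge, or incident to a face with positive charge, or sharing a degree-$6$ face with such a vertex — and bounds their number by $14\cdot\tfrac{17}{4}\cdot(-6\chi)=-357\chi$. Since $n>-357\chi$, some vertex $v$ is not heavy, and the proof then derives by hand, from the forced local structure around $v$ (every incident face of degree $6$, every nearby vertex of degree $2$, $3$, or $4$, every degree-$4$ vertex having exactly two degree-$2$ neighbors), that each face incident to $v$ alternates degree $2$ and degree $4$, which is incompatible with $d(v)\in\{2,3,4\}$. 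This final case analysis is the heart of the proof, it is not a smoothing step, and your sketch neither anticipates it nor explains how your positivity requirement would be met in neutral regions. Until that is addressed, the proposal does not close.
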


\begin{proof}
The proof is similar to that of Theorem~\ref{th:g3island}. We consider
a counterexample $G$ (we can assume that it has a cellular
embedding on some surface of Euler characteristic $\chi$). Since $G$
does not contain any 1-island of size at most 16, {\bf (1)} $G$ has
mininum degree at least 2, and {\bf (2)} $G$ does not contain any path
of at most $16$ vertices of degree at most 3 in which the two
end-vertices have degree two (the two end-vertices are allowed to
coincide).

\smallskip

We now use the classical discharging method. First, every vertex $v$
of $G$ is assigned a charge $\rho(v)=2d(v)-6$, and every face $f$ of
$G$ is assigned a charge $\rho(f)=d(f)-6$ (by Euler's Formula, the sum
of the charge on all vertices and faces is equal to $-6 \chi$). Then, we
locally move the charge as described below.

We first choose, for every face $f$ of $G$, an orientation of $f$ and
set it as the positive orientation of $f$. For any face $f$ of $G$,
for any orientation of $f$ (positive or negative), and for any
occurrence of a vertex $v$ of degree two in a boundary walk of $f$
according to the chosen orientation, take a maximal facial walk of $f$
(a walk consisting only of vertices and edges incident to $f$)
starting at $v$ and going around $f$ in the prescribed orientation of
$f$, such that the inner vertices of the walk have degree precisely
3. Let $u$ be the other end-vertex of the walk (note that possibly
$u=v$ if for instance all vertices of $f$ distinct from $v$ have
degree three; another extreme case is that there are no inner vertices
at all and $u$ and $v$ are neighbors). If the walk contains at least
$5$ inner vertices, the face $f$ gives a charge of $\tfrac12$ to
$v$. Otherwise {\bf (1)}, {\bf (2)} and the maximality of the walk
imply that $u$ has degree at least 4. In this case $u$ gives a charge
of $\tfrac12$ to $v$.

\smallskip

We now prove that after the discharging phase, all vertices and faces
have nonnegative charge.

\smallskip

Let $v$ be any vertex of degree two (recall that by {\bf (1)} $G$ has
mininum degree at least 2). Then $v$ appears four times in the union
of all boundary walks of faces of $G$ (for each face, we consider a
boundary walk in the positive orientation and a boundary walk in the
negative orientation of the face), and therefore receives four times a
charge of $\tfrac12$. The initial charge of $v$ was $\rho(v)=-2$, so
the new charge is $\rho'(v)=-2+4\cdot \tfrac12=0$.

Vertices of degree 3 start with an initial charge of $0$, and neither
give nor receive any charge, so after the discharging their charge is
still $0$. Now let $v$ be a vertex of degree $d\ge 4$. Consider the
facial walks through which it gives a charge of $\tfrac12$ to some
vertices of degree two, and observe that if a neighbor $u$ of $v$ is
right after $u$ in more than one such facial walk, then $u$ has degree
two. For if $u$ had degree at least three and was just after $v$
in two facial walks starting at $v$ as defined above, $u$ would have
degree exactly three and there would be two paths starting at $u$,
each containing at most $3$ inner vertices (each of degree 3) and
finishing at a vertex of degree two. Thus $G$ would contain a path on
at most 9 vertices, such that all vertices have degree at most 3 and
the two endpoints have degree two, contradicting {\bf (2)}.

It follows that $v$ gives a charge of at most $d \cdot 2 \cdot
\tfrac12$. Since it starts with an initial charge of $\rho(v)=2d-6$,
its new charge $\rho'(v)$ is at least $2d-6-d=d-6\ge \tfrac{d}7$ as
soon as $d\ge 7$. 

If $d=6$ observe that $v$ cannot be adjacent to at
least 5 vertices of degree 2, since otherwise $v$ together with these
vertices would form a 1-island of size at most 6. Hence if $d=6$, $v$
gives at most $4+2\cdot \tfrac12=5$. Since it starts with an initial
charge $\rho(v)=6$, we have $\rho'(v)\ge 6-5=1\ge \tfrac{d}6$.

If $d=5$, then by the same argument as above it cannot be adjacent to
at least 4 vertices of degree 2. If it is adjacent to at most two vertices of
degree 2, it gives a charge of at most $2+3\cdot \tfrac12=\tfrac72$
and it follows that $\rho'(v)\ge 4-\tfrac72=\tfrac12\ge
\tfrac{d}{10}$.  Otherwise, $v$ is adjacent to exactly three vertices
of degree 2. But then observe that $v$ cannot give a charge of $\tfrac12$
through any of its two neighbors of degree more that two, since
otherwise $G$ would contain a 1-island of size at most 9. Therefore in
this case $v$ gives a charge of 3, and then $\rho'(v)=1\ge \tfrac{d}{5}$.

If $d=4$, then using again the same argument, $v$ cannot be adjacent
to more than two vertices of degree two. Moreover, if $v$ is adjacent
to two vertices of degree two, then it does not give any charge
through its neighbors of degree more than two (in this case it follows
that $\rho'(v)=0$). If $v$ has one neighbor of degree two then it
cannot give charge through more than one neighbor of degree more than
two (otherwise $G$ contains a 1-island of size at most 12), so in this
case we obtain $\rho'(v)\ge 2-1-\tfrac12=\tfrac12\ge \tfrac{d}{8}$. If
$v$ has no neighbor of degree 2, then\footnote{In the planar case we
  can avoid this argument and simply say that in this case $v$ gives
  at most $4\cdot \tfrac12$ and therefore its new charge is at least
  0. This allows to find 1-islands of size at most 12 (instead of 16)
  in any 2-edge-connected planar graph of girth at least 6.} $v$ does
not give charges through more than two of its neighbors, since
otherwise $G$ would contain a 1-island of size at most 16. Thus, in
this case $\rho'(v)\ge 2-2\cdot \tfrac12=1\ge \tfrac{d}{4}$.

We proved that for any vertex $v$, $\rho'(v)\ge 0$, and if $v$ has degree
at least four and is not a vertex of degree four with exactly two
neighbors of degree two, then $\rho'(v)\ge \tfrac1{10} \,d(v)$. 

\smallskip

Let $f$ be a face of degree $d$ in $G$ (since $G$ has girth at least
6, $d\ge 6$). If $d=6$, no vertex receives any charge from $f$, since
otherwise $f$ contains a vertex of degree two and 5 vertices of degree
three, and then the vertices of $f$ form a 1-island of size 6. It
follows that if $d=6$, $\rho'(f)=0$. Assume now that $d\ge 7$. For
each vertex $v$ of degree two that receives $\tfrac12$ from $f$ in the
positive orientation, let $A^+(v)$ be the set consisting of the five
vertices of degree exactly 3 following $v$ in the positive orientation
of $f$ (the existence of these vertices follows from the definition of
our discharging procedure). Similary, define $A^-(v)$ for each vertex
$v$ of $f$ receiving some charge from $f$ in the negative orientation
of $f$. Observe that all the sets $A^+(v)$ and $A^-(u)$ are pairwise
disjoint: for a pair of sets $A^+(u)$ and $A^+(v)$, or $A^-(u)$ and
$A^-(v)$, this follows from the definition of these sets and the fact
that they exist only if $u$ and $v$ have degree two. For each pair
$A^+(u),A^-(v)$, if these two sets have non-empty intersection then
$G$ contain a 1-island of size at most 11, which is a
contradiction. It follows that $f$ gives at most $\tfrac12
\cdot\floor{\tfrac{d}5}$.  Since $d\ge 7$ and the face $f$ starts with
an initial charge of $\rho(f)=d-6$, in this case the new charge is
$\rho'(f)\ge d-6-\tfrac12\floor{\tfrac{d}5}\ge \tfrac{d}{14}$.

\smallskip

Recall that the total charge on the vertices and faces is $-6\chi$,
and we proved that the charge of every vertex and every face is
nonnegative (if $\chi> 0$ this is already a contradiction, since in
this case the total charge is negative). In the previous paragraphs we
also proved that if a vertex or face of degree $d$ has non-zero
charge, then this charge is at least $\tfrac{d}{14}$.

\smallskip

A vertex that has non-zero charge, or is incident to a face of
non-zero charge, or shares a face of degree 6 with a vertex with
non-zero charge, is said to be \emph{heavy}. Observe that every face
$f$ with non-zero charge defines at most $d(f)$ heavy vertices, and
every vertex $v$ with non-zero charge defines at most $4d(v)+1\le
\tfrac{17}4\,d(v)$ heavy vertices. So the number of heavy vertices is
at most $\tfrac{17}4$ times the sum of the degrees of the vertices and
faces with non-zero charge, which by the previous paragraphs is itself
at most 14 times the total charge. Hence, there are at most $14\cdot
\tfrac{17}4\cdot (-6\chi)=-357\chi$ heavy vertices. Since $G$ contains
more than $-357\chi$ vertices, it contains a vertex $v$ that is not
heavy. By the definition of $v$, all the faces incident to $v$ have
degree 6, and all the vertices incident to these faces (including $v$)
have degree 2, 3 or 4 (and if one of these vertices has degree 4, it
has precisely two neighbors of degree 2).

\smallskip

Let $f$ be any face incident to $v$. Since $d(f)=6$, $f$ contains at
least one vertex of degree 4 (since otherwise the vertices of $f$
would form a 1-island of size 6). By definition of $v$, any such
vertex of degree 4 has exactly two neighbors of degree 2. In
particular, two such vertices of degree four cannot be adjacent,
otherwise they would form a 1-island of size at most 6 (together with
their neighbors of degree 2). Thus, we can assume that $f$ contains at
most 3 vertices of degree 4. If each of these vertices has at least
one neighbor of degree 2 outside $f$, then we obtain a 1-island of
size at most 9. It follows that some vertex $u_1$ of degree four on
the boundary of $f$ has its two neighbors of degree two on $f$. Let
$P$ be the set of three vertices of $f$ distinct from $u_1$ and its two
neighbors of degree 2. Then $P$ contains at least one vertex of degree
4, since otherwise we find a 1-island of size 5 in $G$. Using the same
argument as above $P$ contains a vertex $u_2$ of degree 4 such that
its two neighbors of degree 2 belong to $f$. One of these neighbors is
also a neighbor of degree two of $u_1$, since otherwise we have a
1-island consisting of two adjacent vertices of degree two. It follows
that $f$ contains a third vertex $u_3$ of degree 4, having its two
neighbors of degree two on $f$. Therefore, $f$ contains only vertices
of degree 2 and 4, that alternate on its boundary.

Note that the conclusion above holds for any face $f$ incident to
$v$. This implies that that $d(v)\ne 4$, since otherwise $v$ would
have four neighbors of degree two, and $d(v)\ne 2$, since otherwise a
neighbor of degree four of $v$ has at least three neighbors of degree
two.

\smallskip

This final contradiction concludes the proof of the theorem.
\end{proof}

\begin{figure}[htbp]
\begin{center}
\includegraphics[width=10cm]{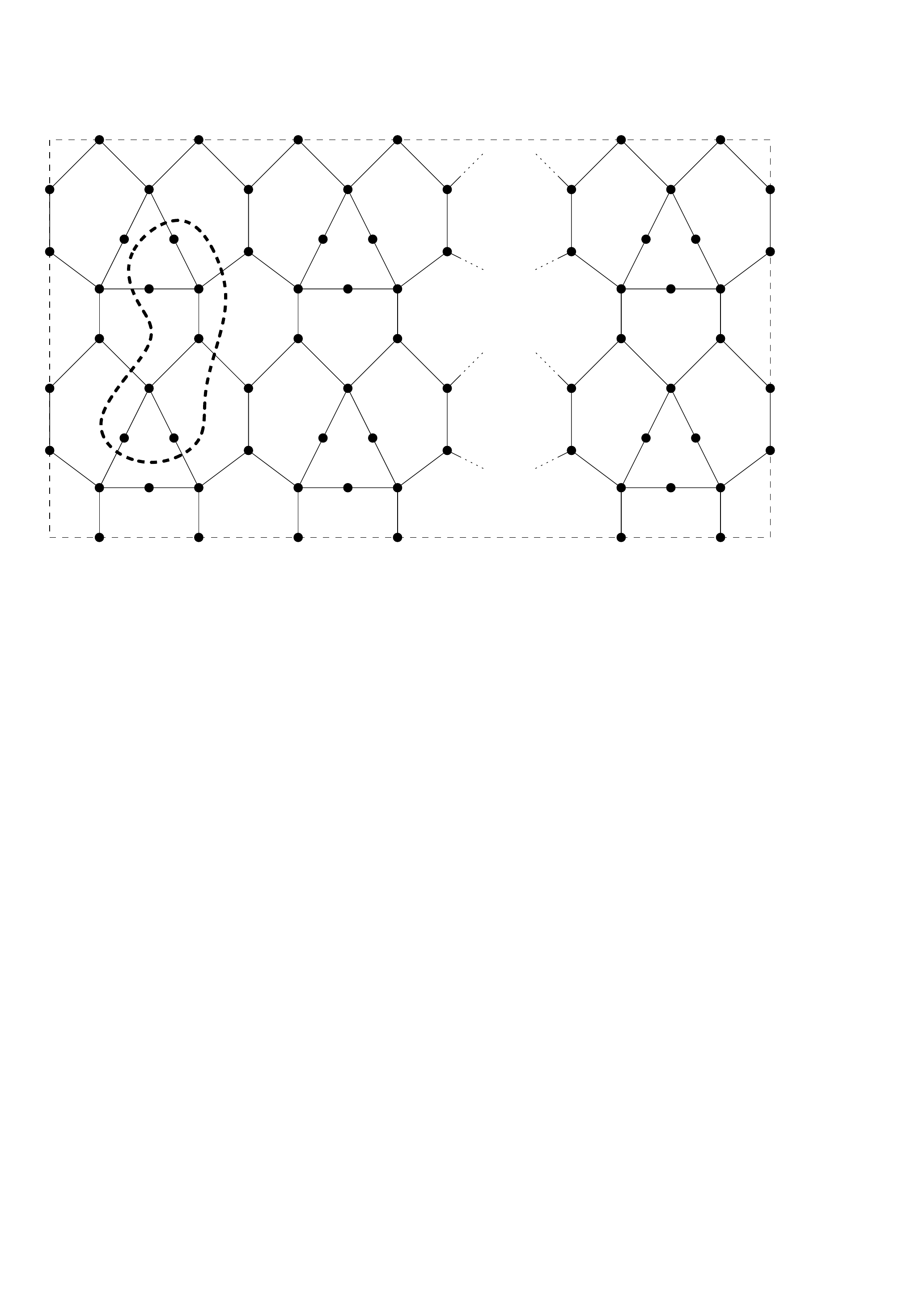}
\caption{Arbitrarily large toroidal graphs of girth 6 with no 1-island of size at most 6. A
  1-island on 7 vertices is highlighted.}
\label{tore7}
\end{center}
\end{figure}

The bound on the size of 1-islands is certainly far from optimal. We
were only able to construct large toroidal graphs of girth 6 with no 1-island of size
at most 6 (see Figure~\ref{tore7}). Using the same proof as that of Theorem~\ref{th:g3col},
the following is a direct consequence of Theorem~\ref{th:g6island}.

\begin{thm}\label{th:g6col}
For any integer $\chi$, for any graph $G$ of girth at least 6 that can
be embedded on a surface of Euler characteristic $\chi$, and any
2-list assignment $L$, $G$ has an $L$-coloring in which every
monochromatic component has size at most
$\max(16,-357\chi)$. Moreover, all vertices except at most $-357\chi$
of them lie in monochromatic components of size at most 16.
\end{thm}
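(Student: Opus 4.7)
The plan is to mimic the proof of Theorem~\ref{th:g3col}, substituting Theorem~\ref{th:g6island} for Theorem~\ref{th:g3island}. I would proceed by induction on $|V(G)|$. The base case is $|V(G)|\le \max(16,-357\chi)$: any $L$-coloring then suffices, since every monochromatic component has size at most $|V(G)|$, and at most $|V(G)|\le -357\chi$ vertices can lie in components of size exceeding $16$ (the remaining regime $|V(G)|\le 16$ being vacuous).

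For the inductive step, assume $|V(G)|>-357\chi$. I would first reduce to the case where $G$ is connected by treating each component separately; both girth and embeddability are inherited by components, and the exceptional-vertex counts are additive. Unlike in the proof of Theorem~\ref{th:g3col}, I would not pass to an edge-maximal supergraph, since adding edges can destroy the girth condition; fortunately, connectedness is the only property needed in order to invoke Theorem~\ref{th:g6island}. That theorem hands us a $1$-island $X\subseteq V(G)$ with $|X|\le 16$. The induction hypothesis, applied to the (still embeddable, still girth at least $6$) graph $G\setminus X$, produces an $L$-coloring $c'$ of $G\setminus X$ in which every monochromatic component has size at most $\max(16,-357\chi)$ and at most $-357\chi$ vertices belong to components of size greater than $16$.

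The coloring $c'$ is then extended to an $L$-coloring $c$ of $G$ by choosing, for each $v\in X$, a color $c(v)\in L(v)$ different from the color $c'(u)$ of $v$'s unique outside neighbor $u$, if such a neighbor exists. This is always possible since $|L(v)|\ge 2$ and, because $X$ is a $1$-island, $v$ has at most one neighbor outside $X$. Consequently, no edge joining $X$ to $V(G)\setminus X$ is monochromatic, so every monochromatic component of $c$ that meets $X$ is confined to $X$ and has size at most $|X|\le 16$, while the monochromatic components of $c$ disjoint from $X$ are exactly those of $c'$. This yields the component-size bound $\max(16,-357\chi)$ for $c$, and shows that the set of vertices in components of size greater than $16$ under $c$ is contained in its analogue for $c'$, hence of size at most $-357\chi$.

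The only conceptually delicate point --- and really the reason the argument works --- is that in extending $c'$ we avoid only the color of the \emph{outside} neighbor, making no attempt to differ from the neighbors of $v$ inside $X$; from $2$-lists such a stronger requirement would generically fail. The $1$-island hypothesis thus matches lists of size exactly $2$, which is precisely why Theorem~\ref{th:g6island} is the right structural ingredient and why no further case analysis is needed.
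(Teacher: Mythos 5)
Your overall strategy is the right one and matches what the paper intends when it says ``using the same proof as that of Theorem~\ref{th:g3col}'': induct on $|V(G)|$, pull out a $1$-island $X$ of size at most $16$ via Theorem~\ref{th:g6island}, extend the inductive coloring of $G\setminus X$ by giving each $v\in X$ a color in $L(v)$ avoiding the color of its at most one outside neighbor, and observe that this confines new monochromatic components to $X$. Your explanation of why $1$-islands are exactly the right structural object for $2$-lists is accurate.

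However, the reduction to connected $G$ is where you have a real gap. You reject the paper's ``pass to an edge-maximal supergraph'' step on the grounds that adding edges can create short cycles --- but to achieve connectedness one never needs to add anything except \emph{bridges} between distinct components, and a bridge lies on no cycle, so the girth-at-least-$6$ hypothesis is automatically preserved. Embeddability on a surface of Euler characteristic $\chi$ is also preserved, since adding a bridge does not increase the Euler genus (it leaves the 2-connected blocks unchanged, and Euler genus is additive over blocks~\cite{MoTh}). Thus passing to a supergraph that is edge-maximal \emph{among girth-$\ge 6$ graphs embeddable on $\Sigma$} still gives a connected graph, and that is the intended argument.

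Your replacement --- ``treat each component separately, the exceptional-vertex counts are additive'' --- does not, as stated, give the bound $-357\chi$. If you apply the inductive hypothesis to each component $G_i$ with the \emph{same} Euler characteristic $\chi$, each component may contribute up to $-357\chi$ exceptional vertices, so the total can be $k\cdot(-357\chi)$ for $k$ components, exceeding the target. To salvage this route you would have to apply the statement to each $G_i$ with its own (larger) Euler characteristic $\chi_i = 2-\eg(G_i)$ and invoke the additivity of Euler genus over components to conclude $\sum\max(0,-357\chi_i)\le -357\chi$; you neither state nor use that fact. Either fix (bridges, or per-component Euler characteristics with additivity of Euler genus) closes the gap, but as written the disconnected case is unaccounted for.
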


Using the argument
mentioned in the footnote in the proof of Theorem~\ref{th:g6island},
Theorem~\ref{th:g6col} can be slightly improved for planar graphs:

\begin{thm}\label{th:girth6p}
For any planar graph $G$ of girth at least 6 and any 2-list assignment
$L$, $G$ has an $L$-coloring in which every monochromatic component
has size at most $12$.
\end{thm}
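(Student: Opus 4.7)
The plan is to establish a planar $1$-island lemma and then invoke the same inductive coloring argument used in Theorems~\ref{th:g3col} and~\ref{th:g6col}. Concretely, I would first prove: every planar graph $G$ of girth at least $6$ contains a $1$-island of size at most $12$.

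To prove this lemma, I would rerun the discharging proof of Theorem~\ref{th:g6island} specialized to $\chi=2$. Suppose for contradiction that $G$ is a connected planar graph of girth at least $6$ with no $1$-island of size at most $12$, and fix a planar embedding. Assign initial charges $\rho(v)=2d(v)-6$ and $\rho(f)=d(f)-6$, so that by Euler's formula the total charge is exactly $-12$. Apply the same discharging rules as in Theorem~\ref{th:g6island}. Each case in that case analysis already invokes a $1$-island of size at most $12$ to bound the charge $v$ gives away, with a single exception: vertices $v$ of degree $4$ with no neighbor of degree $2$, where the general proof needs to forbid $1$-islands up to size $16$. Here I would substitute the footnote's observation, namely that such $v$ gives total charge at most $4\cdot\tfrac12=2$; together with $\rho(v)=2$ this yields $\rho'(v)\ge 0$ directly. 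The $1$-islands invoked in the remaining cases have sizes $6$, $9$, $11$, or $12$, all covered by the contradiction hypothesis. Since every vertex and every face then has non-negative final charge while the total is $-12$, we reach a contradiction.

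With the island lemma in hand, the theorem follows by induction on $|V(G)|$, exactly as in the proof of Theorem~\ref{th:g3col}. When $|V(G)|\le 12$, any $L$-coloring has monochromatic components of size at most $12$. Otherwise, extract a $1$-island $X$ with $|X|\le 12$, apply induction to $G\setminus X$ (which remains planar of girth at least $6$), and extend to $G$: each $v\in X$ has at most one neighbor outside $X$, so at least one color in the $2$-list $L(v)$ differs from that neighbor's color. Since every vertex of $X$ then disagrees with all its neighbors outside $X$, each monochromatic component of $G$ is either preserved from $G\setminus X$ or entirely contained in $X$, and hence has size at most $12$.

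The only real obstacle is the bookkeeping verification that the case analysis in the proof of Theorem~\ref{th:g6island}, outside the exceptional degree-$4$ case handled by the footnote, nowhere appeals to $1$-islands larger than $12$. This is a routine inspection of the six subcases for $d(v)\in\{4,5,6\}$ and the face subcases, and no new ideas are needed beyond the argument already recorded in the footnote.
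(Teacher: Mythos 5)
Your proposal follows exactly the route the paper intends: the paper's ``proof'' of this theorem is just the one-sentence remark preceding it, which says to plug the footnote's observation into the discharging proof of Theorem~\ref{th:g6island} and then redo the inductive coloring argument of Theorem~\ref{th:g3col}. You have correctly identified that the only place the discharging needs 1-islands of size greater than 12 is the degree-$4$, no-degree-$2$-neighbor case, that the footnote repairs it with the weaker bound $4\cdot\tfrac12=2=\rho(v)$, and that the remaining appeals (paths/islands of size $1$, $4$, $6$, $8$, $9$, $11$, $12$, and the face cases of size $6$ and $11$) all fall within the size-$12$ hypothesis, after which the total charge $-6\chi=-12<0$ gives the contradiction. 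Your inductive extension of the coloring over a $1$-island is also exactly the paper's. One small point worth noting, though it does not create a gap in your argument relative to the paper's: the footnote actually states the improved island bound for $2$-edge-connected planar graphs, whereas the theorem (and your lemma) drop that hypothesis; the paper never spells out why the restriction can be dropped. In fact the argument that each degree-$\ge 3$ neighbor of $v$ can be the penultimate vertex of at most one charge-giving facial walk does not use $2$-edge-connectivity, so your formulation is fine, but since you are reconstructing the argument it would be cleaner to say so explicitly rather than silently omit the footnote's extra assumption.
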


Note that it was proved by Borodin, Kostochka, and Yancey~\cite{BKY13}
that every planar graph of girth at least 7 has a 2-coloring in which
every monochromatic component has size at most 2.

\section{Complexity}\label{sec:complex}

In this section we show that it is NP-hard to approximate the minimum
size of the largest monochromatic component in a 2-coloring of a graph
within a constant multiplicative factor. Let us define an
$MC(k)$-coloring as a 2-coloring such that every monochromatic component
has size at most $k$.  Let $\mathcal{MC}(k)$ be the class of graphs having an
$MC(k)$-coloring.

\begin{thm}{\ }
\label{npc}
Let $k\ge 2$ be a fixed integer. The following problems are
NP-complete.

\begin{enumerate}
 \item Given a 2-degenerate graph with girth at least 8 that either is
   in $\mathcal{MC}(2)$ or is not in $\mathcal{MC}(k)$, determine whether it is in
   $\mathcal{MC}(2)$.
 \item Given a 2-degenerate triangle-free planar graph that either is
   in $\mathcal{MC}(k)$ or is not in $\mathcal{MC}(k(k-1))$, determine whether it is in
   $\mathcal{MC}(k)$.
\end{enumerate}
\end{thm}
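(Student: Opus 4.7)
Both problems are clearly in NP, since a 2-coloring is a polynomial-size certificate whose monochromatic components can be computed and measured in polynomial time. The plan is to establish hardness by polynomial-time reductions from 3-SAT (for part (1)) and planar 3-SAT (for part (2)), producing gap instances with the required structural properties.

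For part (1), the construction associates to each 3-CNF formula $\varphi$ a 2-degenerate graph $G_\varphi$ of girth at least 8, built from three kinds of gadgets joined by long paths: (i) a variable gadget with essentially two canonical $MC(2)$-colorings encoding True/False; (ii) a clause gadget that is $MC(2)$-colorable if and only if at least one incident literal is True; and (iii) amplification paths of length greater than $k$ connecting clause outputs to literal inputs. If $\varphi$ is satisfiable, one colors each variable gadget according to a satisfying assignment, and the clauses and amplification paths can be colored so that every monochromatic component has size at most 2. If $\varphi$ is unsatisfiable, some clause gadget must be violated in every 2-coloring, and the violation forces an entire amplification path to become monochromatic, yielding a component of size at least $k+1$. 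The 2-degeneracy is maintained by attaching every new gadget through vertices of degree at most 2, and the girth-8 constraint dictates that all gadgets be essentially trees, with the only cycles of $G_\varphi$ coming from closing clause-variable connections via paths of length at least 8.

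Part (2) is proved by the same strategy, reducing from planar 3-SAT. The gadgets are redesigned to be simultaneously planar, triangle-free, and 2-degenerate, which is substantially more restrictive. The amplification now proceeds in two stages: a base block of size $k$ replaces the size-2 component of the first reduction (since the satisfiable case now allows monochromatic components up to size $k$), and $k-1$ copies of such a block are chained together so that a single violated clause causes all of them to merge, giving one monochromatic component of size at least $k(k-1)+1$. Planarity is preserved by embedding the base gadget in the plane and by placing amplification blocks inside the faces of each clause gadget.

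The main obstacle in both parts is the explicit design of clause gadgets that simultaneously satisfy all structural requirements and force the required gap. The high-girth constraint in part (1) rules out compact clause gadgets and pushes one towards long-path encodings; the combination of planarity and triangle-freeness in part (2) makes even the variable gadget subtle. The most delicate step is verifying that a violated clause actually forces a \emph{single} monochromatic component of the target size and cannot be circumvented by some clever alternative coloring elsewhere in the graph; this is typically handled by a case analysis of the local 2-colorings of each gadget together with a propagation argument along the amplification structure.
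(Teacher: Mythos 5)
Your proposal outlines a plausible strategy but contains a genuine gap: the key gadgets are never constructed, and you explicitly flag the crucial correctness claim (that a violated clause forces a \emph{single} monochromatic component of the target size and cannot be circumvented) as ``the most delicate step'' to be ``handled by a case analysis'' that you do not perform. The entire difficulty of the theorem lies precisely there. The paper's proof reduces not from 3-SAT but from 3-uniform hypergraph 2-colorability, which is a better match because the target problem is itself a 2-coloring problem: this removes the need for clause gadgets entirely. All that is needed is an \emph{equality gadget} forcing two far-apart vertices to receive the same color in any $MC(k)$-coloring, plus a path of length $k+1$ per hyperedge connected cyclically to the hyperedge's three vertices through equality gadgets. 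The paper's equality gadget $J_{y,z,t}$ (two depth-3 trees with branching factor $5t$, glued at leaves via a permuted identification) is a specific, non-obvious construction: the $5t$ branching guarantees that in any $MC(t)$-coloring a vertex of color $c$ has at least $4t+1$ children of color $1-c$, so more than half the leaves are ``properly'' colored in both trees, forcing $y$ and $z$ to agree. The girth-8 bound then comes from the careful leaf-labeling with reversed triples (Claim~\ref{c_j}(ii)), not from the gadgets being trees as you suggest---$J_{y,z,t}$ is very far from a tree.

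For part (2) your proposal is even thinner. You invoke planar 3-SAT and say the gadgets must be ``redesigned to be simultaneously planar, triangle-free, and 2-degenerate,'' which is exactly the problem, not its solution. The paper instead builds three concrete gadgets: $N_{y,z,k}$ (forcing opposite colors via a long zig-zag path), $K_{2,2k(k-1)-1}$ (forcing equal colors), and an uncrosser $U_k$ that propagates colors across edge crossings without creating large monochromatic components, allowing a non-planar reduction graph to be planarized. None of this machinery appears in your proposal. In short, what you have is an announcement of a strategy; the theorem's content is in the gadget constructions and their verification, which you have not supplied.
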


Before proving Theorem~\ref{npc}, we first describe a gadget used in
the proof and its properties.  Let $t\ge 2$ be an integer.  Let
$T_{x,t}$ be the complete rooted tree of height $3$ with root $x$ such
that every internal node has $5t$ children.  We consider the planar
embedding of $T$ into 4 layers such that the root is on layer 0 and
the leaves are on layer 3. We label the $(5t)^3$ leaves with the
triples in $\{1,2,\dots,5t\}^3$ in lexicographical order from the
leftmost leaf with label $(1,1,1)$ to the rightmost leaf with label
$(5t,5t,5t)$.  Let $J_{y,z,t}$ be the graph obtained from two copies
$T_{y,t}$ and $T_{z,t}$ of $T_{x,t}$ by identifying the leaf labelled
$(l_1,l_2,l_3)$ in $T_{y,t}$ with the leaf labelled $(l_3,l_2,l_1)$ in
$T_{z,t}$, for every triple $(l_3,l_2,l_1)\in \{1,2,\dots,5t\}^3$.

\begin{claim}{\ }
\label{c_j}
\begin{enumerate}[(i)]
 \item The graph $J_{y,z,t}$ is bipartite, 2-degenerate, and the
   distance between $y$ and $z$ is 6.
 \item The girth of $J_{y,z,t}$ is 8.
 \item Every $MC(t)$-coloring of $J_{y,z,t}$ is such that $y$
   and $z$ have the same color.
\end{enumerate}
\end{claim}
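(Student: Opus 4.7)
My plan is to prove the three assertions in turn, with (iii) being the main substantive step.

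For part (i) I would combine three quick observations. Each of the trees $T_{y,t}$ and $T_{z,t}$ is bipartite with its leaves lying in the colour class of vertices at odd distance from the root, so identifying leaves with leaves preserves the bipartition and $J_{y,z,t}$ is bipartite. For 2-degeneracy I would iteratively peel vertices of degree at most $2$: the identified leaves have degree $2$ in $J_{y,z,t}$, and once they are removed the graph falls into two disjoint rooted trees of height $2$ whose vertices can in turn be stripped off in order of increasing distance from the root. Finally, removing the identified leaves disconnects $y$ from $z$, so any path from $y$ to $z$ uses at least one such leaf, and the shortest such path has length $3+3=6$.

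For (ii) I would analyse the structure of any cycle $C$ in $J_{y,z,t}$. Since both trees are acyclic, $C$ must visit an even number $2k \ge 2$ of identified leaves, and between two consecutive visited leaves $C$ follows a path that lies in one of the two trees, with the trees alternating; each such path has length at least $2$. If $k \ge 2$ this already forces the length of $C$ to be at least $4k \ge 8$. For $k=1$, the cycle is determined by a pair of distinct identified leaves; writing their $T_{y,t}$-labels as $(l_1,l_2,l_3)$ and $(m_1,m_2,m_3)$, the distance in $T_{y,t}$ is $2$, $4$, or $6$ according as the labels agree on the first two, the first one, or on no coordinate, whereas the $T_{z,t}$-labels are reversed and the distance in $T_{z,t}$ is $2$, $4$, or $6$ according as they agree on the last two, the last one, or on no coordinate. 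A short case check rules out any combination with sum below $8$ (with equality attained for $l_1=m_1$, $l_2=m_2$, $l_3\ne m_3$), giving girth exactly $8$.

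The main substantive step is (iii), for which I would use a double-counting argument. Assume for contradiction that some $MC(t)$-coloring assigns $y$ and $z$ different colours, say $y$ red and $z$ blue. Any internal vertex $v$ of either tree can have at most $t-1$ of its $5t$ children sharing its colour, since otherwise the monochromatic component containing $v$ would already have size at least $t+1$; hence $v$ has at least $4t+1$ children of the opposite colour. Iterating this inequality three times down the levels of $T_{y,t}$ starting from $y$ produces at least $(4t+1)^3$ distinct blue leaves of $T_{y,t}$, and symmetrically at least $(4t+1)^3$ distinct red leaves of $T_{z,t}$. After the identification of leaves, these two sets are disjoint (the colours are different), so $J_{y,z,t}$ contains at least $2(4t+1)^3$ leaves in total. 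The numerical inequality $2(4t+1)^3 > (5t)^3$ (which follows from $(4t+1)/(5t)\ge 4/5$ and $2(4/5)^3 = 128/125 > 1$) then contradicts the fact that $J_{y,z,t}$ has exactly $(5t)^3$ identified leaves. The main obstacle will be ensuring that the layer-by-layer descendant counts produce genuinely distinct leaves (which holds because they are reached via vertex-disjoint root-to-leaf paths inside a tree) and that the discrepancy in the leaf count is robust for every $t\ge 2$ rather than just asymptotically.
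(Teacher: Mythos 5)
Your proposal is correct and follows essentially the same route as the paper: the same cycle-length accounting via the labels for (ii), and for (iii) the same key observation that a vertex has at most $t-1$ same-colored children, the same $(4t+1)^3$ count, and the same numerical inequality $2(4t+1)^3>(5t)^3$. (Minor slip in (i): after removing the identified leaves you strip the two height-2 trees in order of \emph{decreasing} distance from the root, but this does not affect 2-degeneracy.)
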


\begin{proof}{\ }
\begin{enumerate}[(i)]
 \item Trivial.
 \item Since $J_{y,z,t}$ is bipartite, we suppose for
   contradiction that it contains a cycle $C$ of length 4 or 6.
   Notice that $C$ necessarily contains exactly 2 vertices $u$ and $v$
   of degree 2.  Let $(u_1,u_2,u_3)$ and $(v_1,v_2,v_3)$ be the labels
   of $u$ and $v$ in $T_{y,t}$.  The cycle $C$ consists in a path
   $p_y$ contained in $T_{y,t}$ and a path $p_z$ contained in
   $T_{z,t}$ that both link $u$ to $v$.  Since $u$ and $v$ are
   distinct, there exists an index $i$ such that $u_i\ne v_i$. The
   length of $p_y$ is at least $2(4-i)$ and the length of $p_z$ is at
   least $2i$. Thus, the length of $C$ is at least $2(4-i)+2i=8$, a
   contradiction.
 \item Suppose that $T_{x,t}$ has an $MC(t)$-coloring using
   colors in $\acc{0,1}$ such that the root is colored 0.  Notice that
   a vertex colored $c$ in $T_{x,t}$ has at least $5t-(t-1)=4t+1$
   children with color $1-c$.  This implies that at least $(4t+1)^i$
   vertices in layer $i$ are colored $i \pmod 2$.  Since
   $(4t+1)^3>\tfrac12(5t)^3$, more than half of the leaves are colored
   1.  This forces $y$ and $z$ to have the same color in every
   $MC(t)$-coloring of $J_{y,z,t}$.
\end{enumerate}
\end{proof}

\begin{proof}[Proof of Theorem~\ref{npc}.]
In each case, we make a reduction from 3-uniform hypergraph
2-colorability, which is a well-known NP-complete
problem~\cite{Lov73}.  We consider a 3-uniform hypergraph $H$ and
construct a corresponding graph $G$ as follows.  For every vertex $v$
of $H$, we consider a corresponding vertex $v$ in $G$.  These vertices
are called the \emph{primitive vertices} of $G$.

\smallskip

\noindent (1) We describe the reduction for the first result.  For every
hyperedge $e=\{u_0,u_1,u_2\}$ of $H$, we add a path
$e_1,e_2,\dots,e_{k+1}$ in $G$.  For every vertex $e_j$ in this path,
we take a new copy of $J_{y,z,k}$ and identify the vertex $y$ with $e_j$
and the vertex $z$ with $u_{j \pmod 3}$. By Claim~\ref{c_j}, the girth of
$J_{y,z,k}$ is 8 and thus the girth of $G$ is also 8.

\smallskip

We now show that $G$ is in $\mathcal{MC}(2)$ if $H$ is 2-colorable and that $G$
is not in $\mathcal{MC}(k)$ otherwise.  If $H$ is 2-colorable, then we consider
a 2-coloring of $H$ and color the primitive vertices of $G$
accordingly.  This colors the vertex $z$ of every copy of $J_{y,z,k}$
and we extend this precoloring to all the vertices of $G$ by properly
2-coloring every copy of $J_{y,z,k}$.  All the
monochromatic edges belong to paths $e_1,e_2,\dots,e_{k+1}$
corresponding to hyperedges $e$ in $H$.  In such a path, no three
consecutive vertices can have the same color since it would correspond
to a monochromatic hyperedge in $H$.  This implies that $G$ is in
$\mathcal{MC}(2)$.  

Now suppose for contradiction that $H$ is not 2-colorable and that $G$
is in $\mathcal{MC}(k)$.  Since $H$ is not 2-colorable, any 2-coloring of the
primitive vertices of $G$ is such that there exists three primitive
vertices $u$, $v$, and $w$ in $G$ corresponding to a monochromatic
hyperedge $e=\{u,v,w\}$ in $H$. By Claim~\ref{c_j}, any $MC(k)$-coloring
of the gadgets $J_{y,z,k}$ containing $u$, $v$, or $w$ and extending
the precoloring of $u$, $v$, or $w$, is such that the path
$e_1,e_2,\dots,e_{k+1}$ corresponding to $e$ is monochromatic. This
gives a monochromatic component of size $k+1$, which is a
contradiction.  So, if $H$ is not 2-colorable then $G$ is not in
$\mathcal{MC}(k)$.

\medskip

\noindent (2) The reduction for the second result is similar: for
every hyperedge $e$ in
$H$, we add a path $e_1,e_2,\dots,e_{k(k-1)+1}$ in $G$. Such a path
cannot be monochromatic in an $MC(k(k-1))$-coloring of $G$. We now present
the gadgets that are needed to transfer the color of the primitive
vertices to the paths corresponding to the hyperedges of $H$.

Let $N_{y,z,k}$ be the bipartite graph obtained from two non-adjacent
vertices $y$ and $z$ and a path $v_1,v_2,\dots,v_{3k^4}$ such that $y$
is adjacent to all the vertices $v_{i}$ with $i \equiv 0 \pmod 2$ and
$z$ is adjacent to all the vertices $v_{i}$ with $i \equiv 1 \pmod 2$.
Every $MC(k(k-1))$-coloring of $N_{y,z,k}$ is such that $y$ and $z$
have distinct colors. For if $y$ and $z$ had the same color, say color
0, then at most $2k(k-1)-2$ vertices on the path would be colored 0. Then
the path would contain a monochromatic subpath colored 1 of length at least
$\ceil{\frac{3k^4-(2k(k-1)-2)}{2k(k-1)-1}}\ge k(k-1)+1$.

The gadget $N_{y,z,k}$ can thus be used to force two vertices to have
distinct colors in an $MC(k(k-1))$-coloring. To force two vertices to
have the same color, we could simply chain two copies of $N_{y,z,k}$.
We prefer to use a copy of $K_{2,2k(k-1)-1}$, since it is smaller.

In the last gadget $U_k$ depicted in Figure~\ref{kk1}, the dotted
edges represent copies of $K_{2,2k(k-1)-1}$ and the dashed edges
represent copies of $N_{y,z,k}$. Every vertex $y_i$, $1\le i \le
2(k-1)$, has precisely $k-1$ neighbors connected to $x_N$ by a dotted
edge.\\

The gadget $U_k$ has the following properties:
\begin{enumerate}
 \item Every $MC(k(k-1))$-coloring of $U_k$ is such that $x_N$ and
   $x_S$ have the same color, $x_W$ and $x_E$ have the same color.
 \item There exists an $MC(k)$-coloring of $U_k$ such that $x_N$ and
   $x_W$ have the same color, and there exists an $MC(k)$-coloring of
   $U_k$ such that $x_N$ and $x_W$ have distinct colors.
\end{enumerate}

\begin{figure}[htbp]
\begin{center}
\includegraphics[width=12cm]{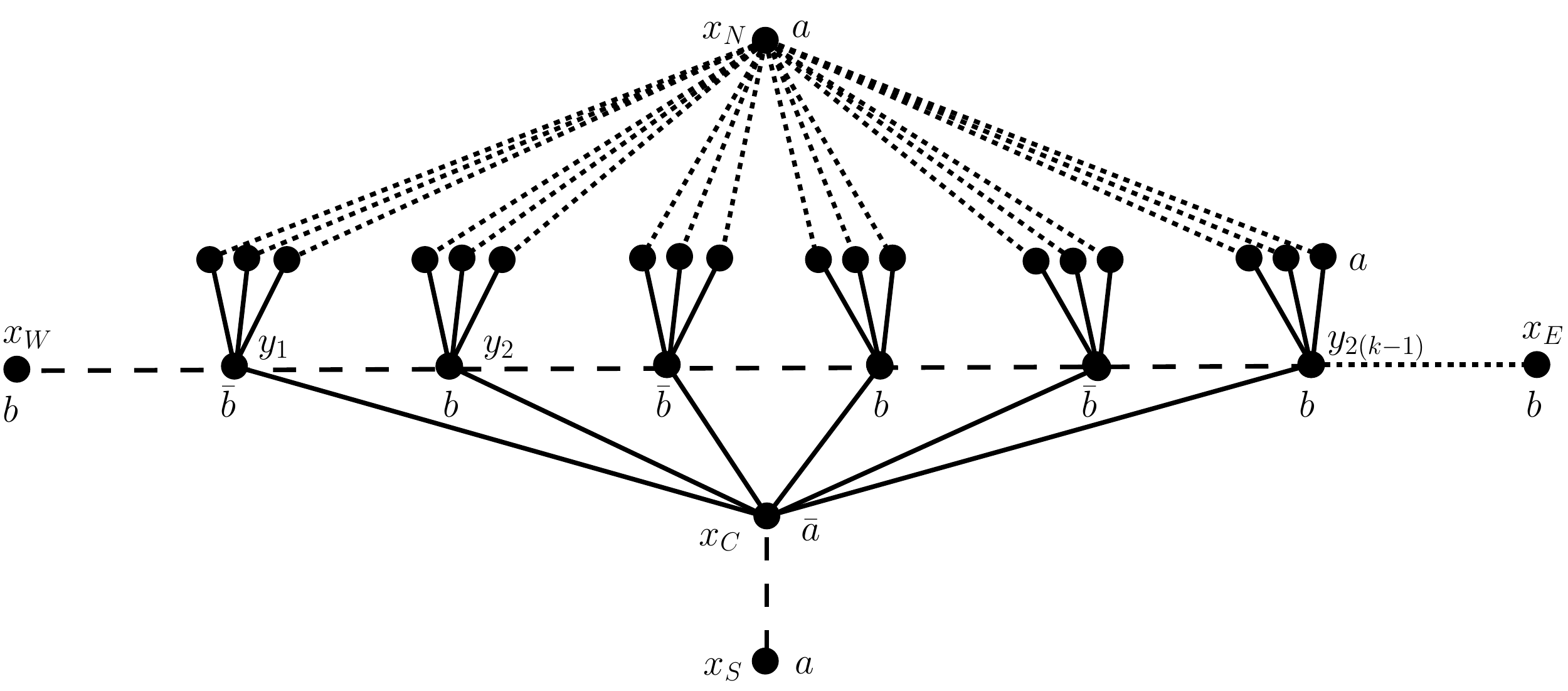}
\caption{Uncrosser gadget $U_k$ with $k=4$.}
\label{kk1}
\end{center}
\end{figure}

Consider an $MC(k(k-1))$-coloring $c$ of $U_k$ such that
$c(x_N)=a\in\{0,1\}$ and $c(x_W)=b\in\{0,1\}$ (note that possibly
$a=b$). In what follows, we write $\bar{a}$ instead of $1-a$ and
$\bar{b}$ instead of $1-b$. By the properties of the dotted and dashed
edges, $c(y_{2i+1})=\bar{b}$, $c(y_{2i})=c(x_E)=b$. Every vertex $y_i$
has $k-1$ neighbors that are linked to $x_N$ with dotted edges.  These
$k-1$ neighbors are thus colored $a$. The vertex $x_C$ is adjacent to
$k-1$ vertices colored $b$ and $k-1$ vertices colored $\bar{b}$.  In
particular, $x_C$ is adjacent to $k-1$ vertices colored $a$, and each
of them is adjacent to $k-1$ vertices colored $a$.  So $x_C$ cannot be
colored $a$, since it would create a monochromatic component of size
$k(k-1)+1$. Thus we have $c(x_C)=\bar{a}$ and $c(x_S)=a$.  This proves
property (1). To prove property (2), observe that the 2-coloring we
just considered contains only monochromatic components of size at most
$k$, regardless whether $a=b$ or not.

To construct $G$, we use copies of $K_{2,2k(k-1)-1}$ to transfer the
color of the primitive vertices to the vertices of the paths as we did
in the in the previous proof. We can draw the graph in the plane in
such a way that the edges of the paths do not cross any other edge
(for instance by drawing each path on the line of equation $x=0$). The
obtained graph is not necessarily planar, so we replace each crossing
of edges by a copy of $U_k$ in order to obtain a planar graph $G$.
\end{proof}

\noindent \textbf{Remark.}  We can modify the gadget $J_{y,z,t}$ in
the proof of Theorem~\ref{npc}(1), so that the same result holds for
graphs with arbitrarily large (but fixed) girth. Note that in this
case we lose the 2-degeneracy. The new gadget consists in the
bipartite double cover of a good expander (for instance, a Ramanujan
graph) having large girth and degree significantly larger that
$k$. The vertices $y$ and $z$ are any pair of (far apart) vertices on the same
size of the bipartition. Using the vertex expansion property, it can
be proven that such a graph admits no $MC(k)$-coloring other than the
proper 2-coloring (and therefore $y$ and $z$ are always colored the
same in such a coloring). We omit the details.

\medskip

\noindent \textbf{Related (recent) results.} After this paper was submitted, two manuscripts dealing with similar
topics appeared. In~\cite{CG15}, Chappell and Gimbel conjectured that for any
fixed surface $\Sigma$ there is a constant $k$ such that every graph
embeddable on $\Sigma$ can be 5-colored without monochromatic components
of size more than $k$. Note that our Theorem~\ref{th:g3col} proves this
conjecture in a strong sense. In~\cite{AUW15}, Axenovich, Ueckerdt,
and Weiner proved the following strong variant of our Theorem~\ref{th:girth6p}:
any planar graph of girth at least 6 has a 2-coloring such that each
monochromatic component is a path on at most 14 vertices.

\medskip

\noindent \textbf{Acknowledgement.} The authors would like to thank
Luke Postle for the interesting discussions, and an anonymous referee
for pointing out reference~\cite{JV05} and suggesting a simplification
in the proof of Theorem~\ref{th:g3island}.

\end{document}